%
\documentclass[a4paper,12pt]{amsart}
%
%
\usepackage{amsthm,amsmath,amssymb}
\usepackage{pdfsync}
\usepackage{paralist}
%
%
\date{1st December 2019} 
\date{11th December 2019} 
\date{13th December 2019} 
\date{21st December 2019} 
\date{23st December 2019} 
\date{26st December 2019} 
\date{28st December 2019} 
\date{30st December 2019} 
\date{31st December 2019} 
\date{2nd January 2020} 
\date{3rd January 2020} 
\date{6th January 2020} 
\date{8th January 2020} 
\date{9th January 2020} 
\date{14th January 2020} 
\date{19th March 2021} 
\date{20th March 2021} 
\date{22th March 2021} 
\date{28th March 2021} 
\date{1st April 2021} 
\date{1st April 2021} 
\date{28th June 2021} 
%
%
\addtolength{\voffset}{-.53in}
\addtolength{\hoffset}{-.45in}
\addtolength{\textheight}{.4in}
\addtolength{\textwidth}{1.25in}
\addtolength{\evensidemargin}{-.3in}
\addtolength{\textheight}{.4in}
\allowdisplaybreaks[4]
%
%
\makeatletter
\def\claimname{Claim}
\def\preclaimword{}

\def\@postclaim[#1]{\bf #1}
\def\@claim[#1]{\preclaimword {\bf #1.} \@ifnextchar[{\@postclaim}{}}
{\par\vskip1.5ex\noindent\@ifnextchar[{\@claim}{\bf\claimname~}}%
{\unskip\nobreak\hfill\par\vskip1.5ex}
\def\fracinline#1/#2{\mbox{\raise0.5ex\hbox{\footnotesize$#1$}{\hskip-.1em$/$\hskip-.1em}\raise-0.5ex\hbox{\footnotesize$#2$}}}
\def\fracinlines#1/#2{\mbox{\raise0.25ex\hbox{\tiny$#1$}{\hskip-.1em$/$\hskip-.1em}\raise-0.25ex\hbox{\tiny$#2$}}}
\makeatother
\makeatletter
\def\proofname{Proof}
\def\preproofword{\it Proof of }
\def\postproofword{\it :~}
\def\@pf[#1]{\preproofword {\it #1} \postproofword~}
\newenvironment{Pf}%
{\par\noindent\@ifnextchar[{\par\vskip 6pt\noindent\@pf}{\it\proofname:~ }}%
{{\unskip\nobreak~\hfill\it\qedsymbol}\par\vskip1.2ex}
\makeatother
%
%
\theoremstyle{plain}
  \newtheorem{thm}{Theorem}[section]
  
  \newtheorem{prop}[thm]{Proposition}
  \newtheorem{cor}[thm]{Corollary}
  \newtheorem{conj}[thm]{Conjecture}
\theoremstyle{definition}

\theoremstyle{remark}

%
%
\def\homeo{\approx}

\def\real{\mathbb{R}}

\def\incl{\mathrm{in}}
\def\id{\mathrm{id}}
\def\emptyarg{}
\def\ad#1{\def\thisarg{#1}\mathrm{ad}\ifx\thisarg\emptyarg\else(#1)\fi}
\def\const#1{\def\thisarg{#1}\operatorname{\iota}\ifx\thisarg\emptyarg\else(\hskip1pt#1)\fi}
\def\open#1{\def\thisarg{#1}\ifx\thisarg\emptyarg{\Site{Open}}\else{\Site{C^{#1}\text{-}Open}}\fi}
\def\domain#1{\def\thisarg{#1}\ifx\thisarg\emptyarg{\Site{Domain}}\else{\Site{C^{#1}\text{-}Domain}}\fi}
\def\convex#1{\def\thisarg{#1}\ifx\thisarg\emptyarg{\Site{Convex}}\else{\Site{C^{#1}\text{-}Convex}}\fi}
\def\locconv#1{\def\thisarg{#1}\ifx\thisarg\emptyarg{\Site{LocConv}}\else{\Site{C^{#1}\text{-}LocConv}}\fi}
\def\polyhedron#1{\def\thisarg{#1}\ifx\thisarg\emptyarg{\Site{Polyhedron}}\else{\Site{C^{#1}\text{-}Polyhedron}}\fi}
\def\smoothology#1{\def\thisarg{#1}\ifx\thisarg\emptyarg{\Category{Diffeology}}\else{\Category{C^{#1}\text{-}Diffeology}}\fi}
\def\der#1by#2{\frac{\operatorname{\mathit{d}}\hspace{-0.1mm}#1}{\operatorname{\mathit{d}}\hspace{-0.1mm}#2}}
\def\nder#1by#2times#3{\frac{\operatorname{\mathit{d}}^{#3}\hspace{-0.2mm}#1}{\operatorname{\mathit{d}}\hspace{-0.4mm}{#2\,}^{#3}}}
\def\pder#1by#2{\frac{\operatorname{\partial}\hspace{-0.1mm}#1}{\operatorname{\partial}\hspace{-0.1mm}#2}}
\def\npder#1by#2times#3{\frac{\operatorname{{\partial\,}^{#3}}\hspace{-0.2mm}#1}{\operatorname{\partial}\hspace{-0.1mm}{#2}^{#3}}}
\def\diff#1{\ifx#1(\operatorname{\mathit{d}}\hspace{0.25mm}#1\else\operatorname{\mathit{d}}\hspace{-0.5mm}#1\fi}
\def\pdiff#1{\ifx#1(\operatorname{\partial}\hspace{0.25mm}#1\else\operatorname{\partial}\hspace{-0.25mm}#1\fi}
\def\mathbold#1{\text{\boldmath{$#1$}}}

\def\Im#1{\operatorname{Im}\hskip1.5pt(\hskip1pt#1)}

\makeatletter
\def\@supp[#1]{\operatorname{supp}\hskip1pt(\hskip.5pt#1)}
\def\supp{\@ifnextchar[{\@supp}{\operatorname{supp}\hskip.5pt}}
\makeatother
%
%
\def\Category#1{{\sf{#1}}}
\def\Site#1{{\sf{#1}}}
\def\Object#1{\operatorname{Obj}\,(\text{\small\(#1\)})}
\def\Morphism#1{\operatorname{Mor}_{\text{\small\,\(#1\)\,}}}
\def\Covering#1{\operatorname{Cov}_{\text{\small\,\(#1\)\,}}}
\def\mathbold#1{\mbox{\boldmath $#1$}}
\def\manifold{\Category{Manifold}}
\def\diffeology{\Category{Diffeology}}

\def\gentop{\Category{NG}}

\def\sets{\Site{Set}}
%
%

\def\homeo{\approx}

\def\Paths{\operatorname{Paths}}
\def\Map{\operatorname{Map}}

\def\Int{\operatorname{Int}}
\def\Cl{\operatorname{Cl}}

\def\ast{\hbox{\footnotesize$*$}}

\def\colim{\operatorname{colim}\,}

\newcommand{\comp}{\smash{\lower-.1ex\hbox{\scriptsize$\circ$\,}}}
%
%
\def\smoothologytxt{diffeology}

\def\smoothologicaltxt{diffeological}

\makeatletter
\newenvironment{enumerate*}{\vskip.5ex
\begin{inparaenum}[i)\hspace{.2em}]
}{\end{inparaenum}\vskip.5ex\noindent}

\def\hitem{\hskip1em\item}
\makeatother
%
%

%
\begin{document}
\ifdefined\expand
\baselineskip21pt
\else
\ifdefined\narrow
\baselineskip15pt
\else
\baselineskip18pt
\fi
\fi
%
%
\title{Whitney Approximation for Smooth CW Complex}
%
%
\author{Norio IWASE}
\email{iwase@math.kyushu-u.ac.jp\vskip-1ex}
%
%
\address
{Faculty of Mathematics,
 Kyushu University,
 Fukuoka 819-0395, Japan\vskip-1ex}
%
%
\keywords{Diffeology, CW complex, Whitney Approximation}
%
%
\subjclass[2010]{Primary 57R12, Secondary 57R55, 57R35, 55P99}
%
\begin{abstract}
We show a Whitney Approximation Theorem for a continuous map from a manifold to a smooth CW complex.  This enables us to show that a topological CW complex is homotopy equivalent to a smooth CW complex in a category of topological spaces. 
It is also shown that, for any open covering of a smooth CW complex, there exists a partition of unity subordinate to the open covering.
\end{abstract}
%
%
\maketitle

\section{Introduction}\label{sect:Introduction}

One can collect ideas of smoothness and put them into a site a concrete category equipped with a coverage in the sense of Grothendieck.
K. T. Chen \cite{MR380859,MR377960,MR454968,MR842915} showed that it can be performed using the site $\convex{}$ the category of convex sets in Euclidean spaces, each of which has non-void interior, and smooth functions in the ordinary sense, where the covering family of a convex set is the set of coverings by open convex subsets.
A similar but a more sophisticated idea is employed by J. M. Souriau \cite{MR607688} (see also P. Iglesias-Zemmour \cite{MR3025051}) using an open set in place of an (open) convex set in the definition of a site $\convex{}$ which is denoted by $\domain{}$.

For a set $X$, let $\mathcal{M}_{X} : \domain{} \to \sets$ be the contravariant functor given by $\mathcal{M}_{X}(U)=\Map(U,X)$ for any open set $U \subset \real^{n}$, and $\mathcal{K}_{X}$ the subfunctor of $\mathcal{M}_{X}$ taking all constant maps.
A {\smoothologicaltxt} space is a set $X$ with a functor $\mathcal{D}_{X}$ satisfying the following:
\begin{enumerate}
\item[(D1)]\label{defn:smoothology-1}
\begin{enumerate*}
\item
\label{defn:smoothology-1-1}
$\mathcal{D}_{X}$ is a subfunctor of $\mathcal{M}_{X}$,
\hitem
\label{defn:smoothology-1-2}
$\mathcal{D}_{X}$ is a superfunctor of $\mathcal{K}_{X}$,
\end{enumerate*}
\item[(D2)]\label{defn:smoothology-2}
For any $U \!\in\! \Object{\domain{}}$ and any map $P : U \to X$, if there is $\{U_{\alpha}\}_{\alpha\in\Lambda} \in \Covering{\domain{}}(U)$ such that $P\vert_{U_{\alpha}} \in \mathcal{D}_{X}(U_{\alpha})$ for all $\alpha\in\Lambda$, then $P \in \mathcal{D}_{X}(U)$.
\end{enumerate}
Here, for a given site $\Site{Site}$, we denote by $\Object{\Site{Site}}$ the set of objects, by $\Morphism{\Site{Site}}(A,B)$ the set of morphisms from $A$ to $B$, and by $\Covering{\Site{Site}}(U)$ the set of covering families on $U$, in  $\Site{Site}$.
For a diffeological space $X$, $\mathcal{D}_{X}$ is often called a diffeology on $X$.

A map $f : X \to Y$ is said to be smooth if the composition with $f$ induces a map $f_{*} : \mathcal{D}_{X}(U) \to \mathcal{D}_{Y}(U)$ for all $U \in \Object{\domain{}}$, in other words, $f_{*}$ is a natural transformation from $\mathcal{D}_{X}$ to $\mathcal{D}_{Y}$.
An element of $\mathcal{D}_{X}(U)$ is called a plot of $X$ on $U$.
We denote by $\smoothology{}$, the category of diffeological spaces and smooth maps.

In this paper, a manifold is assumed always to be paracompact.
We denote by $\manifold$ the category of smooth manifolds and smooth maps which can be imbedded into $\smoothology{}$ as a full subcategory (see \cite{MR3025051}).
One of the advantage to expand our playground to $\smoothology{}$ rather than to restrict ourselves in $\manifold$ is that the category $\smoothology{}$ is cartesian-closed complete and cocomplete (see \cite{MR3025051}), so that we can play with tools imported from homotopy theory.
We also use the convenient category $\gentop$ of topological spaces introduced by Shimakawa-Yoshida-Haraguchi in \cite{MR3884529}.

Taking $D$-topology (see \cite{MR3025051}) gives a functor $D : \smoothology{} \to \gentop{}$.
But for a smooth manifold with or without boundary, or a smooth CW complex $X$, $D(X)$ will often be denoted by $X$ again in the following manner.
For two {\smoothologicaltxt} spaces $X$ and $Y$, a map $f : D(X) \to D(Y)$ in $\gentop$ is called a ``{continuous map}'' and denoted by ``{$f : X \to Y$}'' (in $\smoothology{}$).
For two continuous maps $f, g : X \to Y$, we say that a continuous map $H : D(X {\times} \real) \to D(Y)$ in $\gentop$, satisfying $H(x,t)=f(x)$ if $t \le \varepsilon$ and $H(x,t)=g(x)$ if $t \ge 1{-}\varepsilon$ for some $0\!<\!\varepsilon\!\ll\!1$, is called a ``{continuous homotopy}'' and denoted by ``{$H : X \times \real \to Y$}'' (in $\smoothology{}$).
In the latter case, $f$ and $g$ are said to be ``{continuously homotopic}''. 
A continuous map $f : X \to Y$ is called a ``{continuous homotopy equivalence}'' if there is a continuous map $h : Y \to X$ such that $f \comp h : Y \to Y$ and $h \comp f :X \to X$ are continuously homotopic to the identities $\id_{X}$ and $\id_{Y}$, resp.

\section{Main Results}\label{sect:MainResult}

A map $f$ from a manifold $N$ is said to be smooth on a closed subspace $A \subset N$ if $f$ is smooth on an open superset of $A$.
Let us recall the following theorem (cf. \cite{MR2954043}).

\begin{thm}[Whitney Approximation for Manifold]\label{thm:main-om}
Let $M$ and $X$ be smooth manifolds.
Then for a continuous map $f$ $:$ $M \to X$, there is a smooth map $g$ $:$ $M \to X$ and a homotopy from $f$ to $g$. If $f$ is smooth on a closed subset $A \subset M$, then the homotopy can be taken to be relative to $A$.
\end{thm}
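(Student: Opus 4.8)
The plan is to reduce the statement to the classical Whitney approximation theorem for maps into Euclidean space by exploiting an embedding of the target manifold $X$ together with a smooth tubular neighbourhood. First I would invoke the Whitney embedding theorem to realise $X$ as a closed smooth submanifold of some $\real^{k}$, with inclusion $\iota : X \hookrightarrow \real^{k}$. Composing with $\iota$ turns the given continuous map $f : M \to X$ into a continuous map $\iota \comp f : M \to \real^{k}$, and here approximation is elementary: since $\real^{k}$ is a linear space one smooths $\iota \comp f$ componentwise (for instance by convolution with a mollifier, or by a partition-of-unity argument) to obtain a smooth map $\bar g : M \to \real^{k}$ that is uniformly $\varepsilon$-close to $\iota \comp f$ and joined to it by the straight-line homotopy $H_{t} = (1{-}t)(\iota\comp f) + t\,\bar g$.

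Next I would push the approximation back onto $X$ using a tubular neighbourhood. A closed submanifold of $\real^{k}$ admits an open tubular neighbourhood $U \supset X$ together with a smooth retraction $r : U \to X$; shrinking $U$ by a positive continuous function if necessary, I can arrange that the $\varepsilon$ controlling the closeness of $\bar g$ is small enough (using paracompactness of $M$ to choose $\varepsilon$ as a function on $M$) that both $\bar g(M) \subset U$ and the entire straight-line homotopy stays inside $U$. Then $g := r \comp \bar g : M \to X$ is smooth, and $r \comp H_{t}$ is a homotopy in $X$ from $r \comp \iota \comp f = f$ to $g$. This establishes the first assertion.

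For the relative version, suppose $f$ is already smooth on a closed subset $A \subset M$, meaning smooth on some open superset $W \supseteq A$. The idea is to carry out the Euclidean approximation without disturbing $f$ near $A$. Choose, by paracompactness, a smooth cut-off function $\lambda : M \to [0,1]$ that equals $1$ on an open neighbourhood of $A$ and whose support lies in $W$. When constructing $\bar g$ I would interpolate, setting $\bar g = (1{-}\lambda)\,(\text{smoothed value}) + \lambda\,(\iota\comp f)$, so that $\bar g$ coincides with $\iota \comp f$ near $A$ (where $\iota \comp f$ is already smooth) while being smooth everywhere; the closeness estimate is preserved because the interpolation only mixes values that are already $\varepsilon$-close. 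Consequently $g = r\comp\bar g$ agrees with $f$ on a neighbourhood of $A$, and the straight-line homotopy $H_{t}$ is stationary there, so $r\comp H_{t}$ is a homotopy relative to $A$.

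The main obstacle I anticipate is not any single step but the careful bookkeeping of the closeness parameter. Because $M$ is only paracompact and possibly non-compact, a single global $\varepsilon$ will not suffice: the width of the tubular neighbourhood may shrink to zero toward the ends of $X$, so I must choose $\varepsilon$ as a suitable positive continuous (indeed smooth) function on $M$ adapted to the local geometry, and then verify that the mollification or partition-of-unity construction can be made to respect this variable tolerance while still yielding a globally smooth $\bar g$. Handling this locally-controlled approximation, and checking that the straight-line homotopy never leaves the tubular neighbourhood, is where the real work lies; the passage through $\iota$ and $r$ is otherwise formal.
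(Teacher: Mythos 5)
Your proposal is correct and is exactly the route the paper has in mind: Theorem~\ref{thm:main-om} is recalled from the literature (cf.\ Lee) rather than proved here, and the paper explicitly notes that it is obtained from the Euclidean Whitney Approximation Theorem~\ref{thm:main-oe} combined with the tubular neighbourhood technique, which is precisely your embedding--retraction argument. Your handling of the relative case and of the variable closeness function on a non-compact $M$ matches the standard treatment.
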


The following statement is our main result.
\begin{thm}\label{thm:main}
Let $M$ be a smooth manifold and $X$ be a smooth CW complex.
Then for a continuous map $f$ $:$ $M \to X$, there exists a smooth map $g$ $:$ $M \to X$ and a continuous homotopy from $f$ to $g$.  If $f$ is smooth on a closed subset $A \subset M$ with an additional assumption that $M$ is compact or $X$ is of finite dimension, then the continuous homotopy can be taken to be relative to $A$.
\end{thm}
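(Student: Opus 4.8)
The plan is to induct over the skeletal filtration $X^{(0)} \subset X^{(1)} \subset \cdots$ of the smooth CW complex $X$, reducing each step to the manifold statement of Theorem~\ref{thm:main-om}. I shall use the structural features one expects of a smooth CW complex: each open $n$-cell $e^{n}_{\alpha}$ is a smooth manifold diffeomorphic to $\Int D^{n}$, the characteristic maps $\Phi_{\alpha}\colon D^{n}\to X$ are smooth, and the $n$-skeleton carries a smooth \emph{collar}: an open neighborhood $N^{(n)}\subset X$ together with a smooth deformation retraction $N^{(n)}\to X^{(n)}$, obtained by expanding the outer annuli $\{1-\delta<\lvert t\rvert\le1\}$ of the higher cells radially onto their boundary spheres. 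These collars are the device that turns the cellwise smoothing of $f$ into a \emph{relative} problem to which Theorem~\ref{thm:main-om} applies.

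For the inductive step, suppose $f$ has already been continuously homotoped (rel $A$, in the relative case) to a map that is smooth on the open set $f^{-1}(N^{(n-1)})$. Working over the open set $f^{-1}(N^{(n)})$, I first post-compose with the collar retraction $N^{(n-1)}\to X^{(n-1)}$ to push the part of $f$ landing in the annulus of each $n$-cell into $X^{(n-1)}$, where it is smooth by hypothesis; this leaves only the \emph{cores} of the $n$-cells to be smoothed. Over each open $n$-cell $e^{n}_{\alpha}$, which is a smooth manifold, I then apply Theorem~\ref{thm:main-om} to the relevant restriction of $f$, taken relative to the closed set consisting of the already-smooth annular part together with $A$. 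Because the conclusion of Theorem~\ref{thm:main-om} is relative, the new map is unchanged near the cell boundary and therefore glues continuously --- indeed smoothly, into the smooth structure of $X$ --- across $\partial e^{n}_{\alpha}$ to the rest of the map. The open $n$-cells are mutually disjoint, so the individual cellwise homotopies combine into one continuous homotopy; paracompactness of $M$ (a subordinate partition of unity) is what makes the possibly infinite family locally finite and the total homotopy continuous.

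It remains to assemble the steps. If $X$ is finite-dimensional the induction runs through finitely many skeleta, and if $M$ is compact then $f(M)$ lies in a finite subcomplex; in either case finitely many inductive homotopies concatenate, rel $A$, into a homotopy from $f$ to a smooth map $g$, proving the relative statement. For the general statement (no finiteness, no $A$) with $X$ infinite-dimensional, I first lower the target's dimension: a smooth triangulation gives the $m$-manifold $M$ an $m$-dimensional CW structure with $m=\dim M$, and cellular approximation makes $f$ continuously homotopic to a map into $X^{(m)}$; the skeletal induction then terminates after the finitely many stages $0,1,\dots,m$.

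The main obstacle is the inductive step, and precisely the task of making the cellwise smoothing globally coherent. One must arrange, using the collar retractions, that near every cell boundary $f$ has already been pushed into the lower skeleton and made smooth there, so that the relative approximation of Theorem~\ref{thm:main-om} leaves it fixed on the annulus and the pieces fit together not merely continuously but as a genuine smooth map into the diffeological space $X$. Keeping this compatible with the frozen behavior on $A$ while simultaneously telescoping the (possibly infinitely many) cellwise and skeletal homotopies is exactly what forces the ``$M$ compact or $X$ finite-dimensional'' hypothesis in the relative case: otherwise one would need an infinite concatenation of homotopies rel $A$ whose convergence is not guaranteed, a difficulty the non-relative case avoids through the preliminary cellular reduction to a finite-dimensional skeleton.
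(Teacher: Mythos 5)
Your overall strategy is essentially the one the paper follows: reduce to a finite\-/dimensional skeleton (cellular approximation in the absolute case, compactness or finite dimension of $X$ in the relative case), then induct over skeleta, using a radial collar deformation to push the annular parts of cells into the lower skeleton and a cellwise Whitney approximation on the cores. The organizational differences are minor: you run the induction bottom-up, whereas the paper recurses top-down (it first smooths $f$ over the cores of the top cells, deforms the rest into $X_{n}$, and then applies the inductive hypothesis to the restriction landing in $X_{n}$); and you invoke Theorem \ref{thm:main-om} on each open cell, whereas the paper invokes Theorem \ref{thm:main-oe} with the explicit bound $\kappa=\tau$ on the approximation.

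The proof is, however, incomplete exactly at the step you yourself flag as ``the main obstacle'', and the missing content is not routine. First, ``post-compose with the collar retraction'' is not a well-defined operation: the retraction $r\colon N^{(n-1)}\to X^{(n-1)}$ does not agree with the identity near the frontier of $f^{-1}(N^{(n-1)})$ in $M$, so $r\comp f$ neither extends $f$ continuously nor even makes sense off that preimage; one must interpolate between $r$ and the identity by a partition of unity on the disks (the paper's homotopy $K_{n+1}$ built from $\rho'_{U},\rho'_{V}$), and in the relative case one must further damp the deformation by a smooth function vanishing on $A$ (the paper's $\rho_{A^{c}}$ in $H_{2}$), since the raw retraction moves $f$ on $A\cap f^{-1}(N^{(n-1)})$ and destroys the rel-$A$ property. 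You name this compatibility as a difficulty but supply no mechanism. Second, for the cellwise approximations to glue, it is not enough that Theorem \ref{thm:main-om} be applied relative to ``the already-smooth annular part'': the relative set must contain a whole neighborhood, inside $f^{-1}(e^{n}_{\alpha})$, of the frontier of $f^{-1}(e^{n}_{\alpha})$ in $M$, and $f$ must be smooth there \emph{as a map into the open cell via the characteristic chart}, not merely into $X$; after your retraction step the nearby points land in $X^{(n-1)}$, outside that chart, so the domains, the relative sets, and the buffer zones between ``core'' and ``already retracted'' have to be chosen with quantitative care. This is precisely what the paper's apparatus of nested sets $U^{(t)},V^{(t)}$, the estimate $d(g_{\alpha}(x),f(x))<\tau$, and Proposition \ref{prop:g-1-openset} accomplish; without some version of it, the claim that the pieces ``glue smoothly across $\partial e^{n}_{\alpha}$'' is an assertion rather than a proof.
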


Theorem \ref{thm:main-om} is usually shown by using Whitney Approximation Theorem stated below together with the tubular neighborhood technique (cf. \cite{MR2954043}).
In this paper, we shall also use the theorem by adopting a skeleton-wise argument to show our main result:

\begin{thm}[Whitney Approximation Theorem]
\label{thm:main-oe}
Let $N$ be a smooth manifold and $n \!\ge\! 1$.
Then for continuous functions $f$ $:$ $N \to \real^{n}$ and $\kappa$ $:$ $N \to (0,\infty)$, there is a smooth function $g$ $:$ $N \to \real^{n}$ such that $d(f(x),g(x)) \!<\! \kappa(x)$ for $x \in N$.  If $f$ is smooth on a closed subset $A \subset N$, then $g$  can be chosen to be equal to $f$ on $A$.
\end{thm}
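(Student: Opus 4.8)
The plan is to build $g$ by approximating $f$ locally by constant values and gluing these together with a smooth partition of unity, a construction made available by the paracompactness of $N$. First I would exploit the continuity of $f$ and of $\kappa$: for each point $p \in N$, since $x \mapsto \kappa(x)$ is positive and continuous and $f$ is continuous, one can choose an open neighbourhood $U_{p}$ of $p$ so small that $d(f(x), f(p)) < \kappa(x)$ for every $x \in U_{p}$. The family $\{U_{p}\}_{p \in N}$ is then an open cover of $N$; since $N$ is a paracompact manifold it admits a locally finite refinement $\{U_{\alpha}\}_{\alpha \in \Lambda}$ together with a subordinate smooth partition of unity $\{\phi_{\alpha}\}_{\alpha \in \Lambda}$, where for each $\alpha$ we fix a point $p_{\alpha}$ with $U_{\alpha} \subset U_{p_{\alpha}}$ and set $c_{\alpha} = f(p_{\alpha}) \in \real^{n}$.

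Next I would define
\[
g = \sum_{\alpha \in \Lambda} \phi_{\alpha}\, c_{\alpha}.
\]
Because the partition of unity is locally finite and each $\phi_{\alpha}$ is smooth with $\supp[\phi_{\alpha}] \subset U_{\alpha}$, the sum is locally a finite sum of smooth functions, so $g : N \to \real^{n}$ is smooth. For the estimate I would use $\sum_{\alpha} \phi_{\alpha}(x) = 1$ to write $f(x) - g(x) = \sum_{\alpha} \phi_{\alpha}(x)\,(f(x) - c_{\alpha})$, whence
\[
d(f(x), g(x)) \le \sum_{\alpha} \phi_{\alpha}(x)\, d(f(x), c_{\alpha}).
\]
For every $\alpha$ contributing to this sum at $x$ one has $\phi_{\alpha}(x) > 0$, hence $x \in \supp[\phi_{\alpha}] \subset U_{p_{\alpha}}$, so $d(f(x), c_{\alpha}) < \kappa(x)$; since the weights are nonnegative, sum to $1$, and at least one is positive, the bound is strict, giving $d(f(x), g(x)) < \kappa(x)$ as required.

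For the relative statement, where $f$ is smooth on a closed set $A$, that is, smooth on some open set $W \supset A$, I would modify the cover so that $g$ reproduces $f$ exactly on $A$. Using normality of $N$ I would first choose an open set $W'$ with $A \subset W' \subset \overline{W'} \subset W$, and then cover $N$ by $W$ together with neighbourhoods $U_{\alpha}$ as above but now chosen inside the open set $N \setminus \overline{W'}$ (which is disjoint from $A$); writing $\{\psi, \phi_{\alpha}\}$ for a smooth partition of unity subordinate to this cover, I would set $g = \psi\, f + \sum_{\alpha} \phi_{\alpha}\, c_{\alpha}$. Since $\supp \psi \subset W$ and $f$ is smooth there, $\psi f$ is smooth, so $g$ is smooth; the estimate goes through verbatim because $f(x) - g(x) = \sum_{\alpha} \phi_{\alpha}(x)(f(x) - c_{\alpha})$ once more. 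The point requiring care is the behaviour on $A$: each $\supp[\phi_{\alpha}] \subset U_{\alpha} \subset N \setminus \overline{W'}$ is disjoint from $A$, so $\phi_{\alpha}$ vanishes on $A$ and consequently $\psi = 1 - \sum_{\alpha} \phi_{\alpha} = 1$ there, giving $g = f$ on $A$. I expect this relative case, namely arranging the approximating pieces to retreat from $A$ while still covering $N$ and preserving both smoothness and the strict estimate, to be the only genuinely delicate part; the absolute case is essentially forced once a subordinate smooth partition of unity is in hand.
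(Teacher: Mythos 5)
Your proof is correct, and it is essentially the argument the paper relies on: the paper states this theorem without proof as a classical result (citing Lee's \emph{Introduction to Smooth Manifolds}), and your construction --- local constant approximations $c_{\alpha}=f(p_{\alpha})$ glued by a smooth partition of unity, with the cover retracted away from a closed neighbourhood of $A$ and the term $\psi f$ added in the relative case --- is precisely the standard proof given there. No gaps; the justification of the strict inequality and of $g=f$ on $A$ are both handled correctly.
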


Now we restate \cite[Theorem A.1]{MR3991180} in this context as follows.

\begin{thm}\label{thm:main2}
A CW complex is homotopy equivalent in $\gentop$ to a smooth CW complex.\footnote{We realized that a similar result to Theorem \ref{thm:main2} for cubical or simplicial version of smooth CW complex is obtained by Haraguchi and Shimakawa \cite{arXiv:1912.05359,arXiv:1811.06175} or Hiroshi Kihara \cite{arXiv:1702.04070}.}
\end{thm}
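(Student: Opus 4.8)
The plan is to build the smooth CW complex together with the homotopy equivalence skeleton by skeleton, using Theorem \ref{thm:main} at each stage to replace the (topological) attaching maps by smooth ones. Let $Y$ be a CW complex with skeleta $Y^{(n)}$ and let $\varphi_{\alpha} : S^{n-1} \to Y^{(n-1)}$ denote the attaching maps of its $n$-cells. I would construct, by induction on $n$, a smooth CW complex $X$ with skeleta $X^{(n)}$ together with continuous homotopy equivalences $h_{n} : X^{(n)} \to Y^{(n)}$ and homotopy inverses $g_{n} : Y^{(n)} \to X^{(n)}$, arranged so that $h_{n}$ restricts to $h_{n-1}$ on $X^{(n-1)}$. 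For the base case, set $X^{(0)} := Y^{(0)}$, a discrete set of points and hence a $0$-dimensional smooth CW complex, with $h_{0} = g_{0} = \id$.

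For the inductive step, suppose $X^{(n-1)}$, $h_{n-1}$ and $g_{n-1}$ are given. For each $n$-cell of $Y$, consider the composite $g_{n-1} \comp \varphi_{\alpha} : S^{n-1} \to X^{(n-1)}$, a continuous map from the smooth manifold $S^{n-1}$ into the smooth CW complex $X^{(n-1)}$. By Theorem \ref{thm:main} there is a smooth map $\psi_{\alpha} : S^{n-1} \to X^{(n-1)}$ that is continuously homotopic to $g_{n-1} \comp \varphi_{\alpha}$. Attaching $n$-cells to $X^{(n-1)}$ along the smooth maps $\psi_{\alpha}$ produces a smooth CW complex $X^{(n)}$. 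Applying $h_{n-1}$ to the homotopy $\psi_{\alpha} \simeq g_{n-1} \comp \varphi_{\alpha}$ and using $h_{n-1} \comp g_{n-1} \simeq \id$ yields $h_{n-1} \comp \psi_{\alpha} \simeq \varphi_{\alpha}$ for every $\alpha$. Since each cell attachment is a cofibration, the standard gluing lemma---attaching cells along homotopic maps over a homotopy equivalence of the bases yields a homotopy equivalence of the results, extending the given one---provides a continuous homotopy equivalence $h_{n} : X^{(n)} \to Y^{(n)}$ restricting to $h_{n-1}$, together with a homotopy inverse $g_{n}$. All of this stays inside $\gentop$ because $D$ preserves the colimits defining skeleta.

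Finally, I would pass to the colimit: set $X = \colim_{n} X^{(n)}$, a smooth CW complex, and $h = \colim_{n} h_{n} : X \to Y$. As $h$ restricts to a homotopy equivalence on each skeleton, and homotopy groups of a CW complex are detected on finite skeleta, $h$ induces isomorphisms on all homotopy groups and is therefore a weak homotopy equivalence. Since both $X$ and $Y$ are CW complexes in $\gentop$, Whitehead's theorem upgrades $h$ to a genuine continuous homotopy equivalence, which is the assertion of the theorem.

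I expect the main obstacle to be the homotopy-theoretic bookkeeping across skeleta rather than the smoothing step, which is supplied cleanly by Theorem \ref{thm:main}. Concretely, one must control the homotopy inverses $g_{n}$ so that they remain compatible enough to behave well under the colimit, and one must verify that both the gluing lemma and Whitehead's theorem apply verbatim to the $D$-topologies of smooth CW complexes in $\gentop$. The infinite-dimensional case, where the compatibility of the inverses and the colimit topology must be handled carefully, is where the real work lies.
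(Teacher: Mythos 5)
Your proposal is correct and follows essentially the same route as the paper: induct over skeleta, use Theorem \ref{thm:main} to replace the transported attaching maps by smooth ones, attach cells along the smoothed maps, and invoke standard homotopy-theoretic gluing to propagate the homotopy equivalence before passing to the colimit. The only cosmetic differences are the direction in which you carry the equivalence and your explicit appeal to Whitehead's theorem at the colimit stage, where the paper simply asserts the colimit map is a homotopy equivalence.
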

\vskip-.5ex
\begin{Pf}
For any CW complex $Y$, we construct a smooth CW complex $X$ and a homotopy equivalence from $Y$ to $X$. 
To do this, we define a smooth CW pair $(X_{n+1},X_{n})$ from the CW pair $(Y_{n+1},Y_{n})$ and a continuous homotopy equivalence from $(Y_{n+1},Y_{n})$ to $(X_{n+1},X_{n})$ in $\gentop$, by induction on $n \!\ge\! 0$.
If $n=0$, we have nothing to do, and we proceed the case when $n{+}1 \!\ge\! 1$, assuming that we have constructed a smooth CW complex $X_{n}$ of dimension $n$ and a continuous homotopy equivalence $\phi_{n} : Y_{n} \to X_{n}$ in $\gentop$.
Let $Y_{n+1}=Y_{n} \cup_{k_{n}}\amalg_{\alpha}\,D^{n+1}_{\alpha}$, where $k_{n} : S_{n} \to Y_{n}$ is a continuous map in $\gentop$ and $S_{n} = \amalg_{\alpha}\,S^{n}_{\alpha}$ is a disjoint sum of $n$-spheres in $\manifold$.
Then for a continuous map $f_{n} = \phi_{n} \comp k_{n} : S_{n} \to X_{n}$ from a manifold to a smooth CW complex, Theorem \ref{thm:main} tells us that there is a smooth map $h_{n} : S_{n} \to X_{n}$ in $\smoothology{}$ which is continuously homotopic to $f_{n} : S_{n} \to X_{n}$.
Let $X_{n+1}=X_{n} \cup_{h_{n}} \amalg_{\alpha}\,D^{n+1}_{\alpha}$ a smooth CW complex of dimension $n{+}1$ in $\smoothology{}$.
Then, by standard arguments in homotopy theory, we obtain a homotopy equivalence $\phi_{n+1} : Y_{n+1} \to X_{n+1}$ extending $\phi_{n} : Y_{n} \to X_{n}$ in $\gentop$.
By putting $X=\underset{n}\colim\,X_{n}$, we obtain a smooth CW complex $X$ and a homotopy equivalence $\phi : Y \to X$ which is given by $\phi\vert_{Y_{n}}=\phi_{n}$ for all $n \!\ge\! 0$ in $\gentop$.
\end{Pf}

We expect that the following assertions are also true.

\begin{conj}\label{conj:main-m}
Let $X$ and $Y$ be smooth CW complexes.
Then, for a continuous map $f$ $:$ $Y \to X$, there exists a smooth map $g$ $:$ $Y \to X$ and a continuous homotopy from $f$ to $g$.  If $f$ is smooth on a closed subset $A \subset Y$, then the continuous homotopy can be taken to be relative to $A$.
\end{conj}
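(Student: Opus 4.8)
The plan is to mimic the skeleton-wise induction already used in the proof of Theorem~\ref{thm:main2}, but now applied to the domain $Y$ rather than to the target. Writing $Y = \colim Y_{n}$ for the skeletal filtration of the smooth CW complex $Y$, I would construct, by induction on $n$, a smooth map $g_{n} : Y_{n} \to X$ together with a continuous homotopy $F_{n} : Y_{n} \times \real \to X$ from $f\vert_{Y_{n}}$ to $g_{n}$, arranged so that $g_{n+1}$ and $F_{n+1}$ extend $g_{n}$ and $F_{n}$. The base case $n=0$ is immediate, since $Y_{0}$ is a discrete set of points on which $f$ is already smooth. The construction is then assembled in the colimit, exploiting that $Y$ carries the colimit diffeology of its cells, so that a map out of $Y$ is smooth precisely when its composite with every characteristic map $\Phi_{\alpha} : D^{n+1}_{\alpha} \to Y$ is smooth.

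For the inductive step, write $Y_{n+1} = Y_{n} \cup \amalg_{\alpha} D^{n+1}_{\alpha}$ with smooth attaching maps $S^{n}_{\alpha} \to Y_{n}$. Precomposing $F_{n}$ with each attaching map and extending over the cell by the homotopy extension property in $\gentop$, I would replace $f \comp \Phi_{\alpha}$ by a continuous map $\tilde{f}_{\alpha} : D^{n+1}_{\alpha} \to X$ that agrees on the boundary sphere with the smooth map $g_{n}$ pulled back along the attaching map, and that is continuously homotopic rel $\partial D^{n+1}_{\alpha}$ to $f \comp \Phi_{\alpha}$. Since $g_{n}$ and the attaching maps are smooth, $\tilde{f}_{\alpha}$ is already smooth on a collar neighbourhood of $\partial D^{n+1}_{\alpha}$, so Theorem~\ref{thm:main} applies to $\tilde{f}_{\alpha} : D^{n+1}_{\alpha} \to X$ relative to the closed subset $\partial D^{n+1}_{\alpha} \cup \Phi_{\alpha}^{-1}(A)$. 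Crucially, the domain $D^{n+1}_{\alpha}$ is compact, so the compactness hypothesis required by the relative part of Theorem~\ref{thm:main} is automatically met; this is exactly why the present statement needs no global compactness or finite-dimensionality assumption. The resulting smooth maps agree with $g_{n}$ on every attaching sphere, hence glue to a single continuous map $g_{n+1} : Y_{n+1} \to X$ whose composite with each characteristic map is smooth, so $g_{n+1}$ is smooth for the pushout diffeology; the cell-wise homotopies rel boundary likewise glue to the required $F_{n+1}$.

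Finally I would pass to the colimit, telescoping the $F_{n}$ into a single continuous homotopy $F : Y \times \real \to X$ by running the $n$-th homotopy in a time window such as $[1-2^{-n},\,1-2^{-n-1}]$; the buffers built into the definition of continuous homotopy make the successive concatenations match up, and one checks that $g = \colim g_{n}$ is smooth, because smoothness on $Y$ is detected cell by cell, and that $F$ is continuous, including at the final time. The relative case is obtained by insisting throughout that each application of Theorem~\ref{thm:main} is taken rel the relevant portion of $A$, so that $F$ is stationary on $A$.

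I expect the main obstacle to lie precisely in this infinite assembly rather than in any single cell. For a compact or finite-dimensional $Y$ the telescope terminates and continuity is automatic, but for a general infinite-dimensional smooth CW complex one must verify that the colimit homotopy $F$ is genuinely a morphism in $\gentop$ — in particular continuous as $t \to 1$ — and that the colimit diffeology really does make $g$ smooth, i.e. that an arbitrary plot of $Y$, which may wander across infinitely many cells, is carried by $g$ to a plot of $X$. This compatibility of the cell-wise approximation with the colimit diffeology, together with the control needed to keep the telescoped homotopy continuous, is the delicate point that separates the present statement from Theorem~\ref{thm:main} and is the reason it is recorded here only as a conjecture.
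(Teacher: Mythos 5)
First, note that the paper offers no proof of this statement: it is recorded explicitly as Conjecture~\ref{conj:main-m}, so there is no argument of the author's to compare yours against. Your skeleton-wise induction on the domain $Y$, reducing to Theorem~\ref{thm:main} cell by cell, is certainly the natural line of attack (it mirrors the proof of Theorem~\ref{thm:main2}), and your observation that compactness of each closed cell supplies the hypothesis needed for the relative part of Theorem~\ref{thm:main} is the right reason to hope that no global finiteness assumption is needed. But as written the proposal has concrete gaps, and they are not only the colimit issue you flag at the end. Indeed, the colimit step is arguably the \emph{least} problematic part: since $\smoothology{}$ is cartesian closed and $D$ preserves colimits (Proposition~\ref{prop:pushout}), one has $Y=\underset{n}\colim\,Y_{n}$ in $\smoothology{}$ and $D(Y\times\real)=\underset{n}\colim\,D(Y_{n}\times\real)$, so a map that is smooth on every $Y_{n}$ is smooth on $Y$, and a homotopy continuous on every $Y_{n}\times\real$ is continuous on $Y\times\real$; a plot of $Y$ cannot ``wander across infinitely many cells'' because the colimit diffeology forces it to factor locally through a finite skeleton.

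The real gaps are these. (i) Your two assembly devices are incompatible as stated: if $F_{n+1}$ is built on each new cell as the concatenation of an HEP-step following $F_{n}$ with a rel-boundary approximation, then $F_{n+1}\vert_{Y_{n}\times\real}$ is a \emph{reparametrization} of $F_{n}$, not $F_{n}$; iterating, the restriction of $F_{n+k}$ to $Y_{n}$ runs $F_{n}$ in a time interval shrinking to a point, and the colimit is discontinuous. The telescoping that avoids this requires instead a sequence of homotopies $G_{n}$ each defined on \emph{all} of $Y$ (via the HEP for the pair $(Y,Y_{n})$) and relative to $Y_{n-1}\cup A$, performed in the late window $[1-2^{-n},1-2^{-n-1}]$; this is a different bookkeeping from the one in your first paragraph and needs to be set up explicitly. (ii) Theorem~\ref{thm:main} is stated and proved for a smooth manifold $M$; the closed disk $D^{n+1}_{\alpha}$ is a compact manifold \emph{with boundary}, and nothing in Section~5 addresses boundaries, so the cell-wise application requires an unproved extension of Theorem~\ref{thm:main} (or an argument on an open thickening of the disk). (iii) The rel-$A$ bookkeeping through the collar/HEP step is not addressed: the modification that makes $\tilde{f}_{\alpha}$ agree with $g_{n}\comp h_{\alpha}$ near $S^{n}_{\alpha}$ is built from the merely continuous homotopy $F_{n}$, so you must arrange that it equals the smooth map $g_{n}\comp h_{\alpha}\comp p_{\alpha}$ on a full open sub-collar to get smoothness on a \emph{neighbourhood} of the boundary, and, more seriously, points of $\Phi_{\alpha}^{-1}(A)$ lying in that collar are moved by the HEP step, so the resulting homotopy is not relative to $A$ without further care (one needs $F_{n}$ rel $A\cap Y_{n}$ \emph{and} a collar compatible with $A$). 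These are exactly the kinds of obstructions that presumably led the author to leave the statement as a conjecture rather than a theorem.
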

\begin{conj}\label{conj:main-m1}
Let $X$ and $Y$ be smooth CW complexes.
If they have the same topological homotopy type, then they have the same {\smoothologicaltxt} homotopy type.
\end{conj}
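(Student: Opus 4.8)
The plan is to assume Conjecture \ref{conj:main-m}, stated just above, and to upgrade a topological homotopy equivalence to a diffeological one in two stages: first smoothing the equivalence together with its homotopy inverse, and then smoothing the homotopies that witness the equivalence.

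First I would record the data of a topological homotopy equivalence as continuous maps $f : Y \to X$ and $\bar f : X \to Y$ in $\gentop$ with $f \comp \bar f$ and $\bar f \comp f$ continuously homotopic to $\id_X$ and $\id_Y$. By Conjecture \ref{conj:main-m} there are smooth maps $g : Y \to X$ and $\bar g : X \to Y$ continuously homotopic to $f$ and $\bar f$ respectively. Since continuous homotopy is an equivalence relation compatible with composition in $\gentop$, concatenating homotopies gives $g \comp \bar g \homotopic \id_X$ and $\bar g \comp g \homotopic \id_Y$ continuously. At this point $g$ is a smooth map that is a continuous homotopy equivalence with smooth inverse $\bar g$, but the homotopies are still only continuous.

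The second and essential stage is to promote these continuous homotopies to smooth ones. Consider a continuous homotopy $H : X \times \real \to X$ from $g \comp \bar g$ to $\id_X$; by the definition of continuous homotopy it is independent of the $\real$-coordinate and equal to the smooth maps $g \comp \bar g$ and $\id_X$ for $t \le \varepsilon$ and $t \ge 1-\varepsilon$ respectively, for some $0 < \varepsilon \ll 1$. Restricting to $X \times I$ with $I = [0,1]$, the map $H$ is thus smooth on the open superset $X \times ([0,\varepsilon) \cup (1-\varepsilon,1])$ of the closed subset $A = X \times ([0,\varepsilon/2] \cup [1-\varepsilon/2,1])$. Since $X \times I$ is a smooth CW complex whenever $X$ is (the product cell structure carries smooth characteristic maps), I would apply Conjecture \ref{conj:main-m} relative to $A$ to obtain a smooth map $\tilde H : X \times I \to X$ agreeing with $H$ on $A$; as $\tilde H$ is then constant in $t$ near the two ends, extending it by those constant values over $\real \setminus I$ yields a smooth homotopy from $g \comp \bar g$ to $\id_X$. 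The same argument applied to $\bar g \comp g$ produces a smooth homotopy to $\id_Y$, so $g$ and $\bar g$ realise a diffeological homotopy equivalence and $X$ and $Y$ have the same diffeological homotopy type.

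The main obstacle is confined to the second stage and is twofold. The deeper dependence is on Conjecture \ref{conj:main-m} itself, which is unproven; the whole argument is conditional on it, and in particular on its relative form carrying no finiteness hypothesis. The more structural point to justify with care is that the product $X \times I$ (equivalently $X \times \real$) of a smooth CW complex with the interval is again a smooth CW complex carrying a closed subcomplex on which $H$ is already smooth, so that the relative approximation of Conjecture \ref{conj:main-m} may legitimately be invoked there. Verifying that the product cell structure admits smooth characteristic maps, and that the $D$-topology of the product agrees with its CW topology in $\gentop$, is the technical heart of the reduction.
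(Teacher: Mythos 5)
The statement you are addressing is presented in the paper as Conjecture \ref{conj:main-m1}: the author offers no proof and merely writes that he ``expects'' it to be true. So there is no argument in the paper to compare yours against, and --- more importantly --- what you have written is not a proof of the statement. Both stages of your argument rest on Conjecture \ref{conj:main-m}, which is itself open; the paper only establishes the corresponding approximation theorem (Theorem \ref{thm:main}) when the \emph{source} is a smooth manifold, and its relative form there already requires compactness of the source or finite-dimensionality of the target, a hypothesis your second stage quietly assumes away. A conditional reduction of one conjecture to another can be worthwhile, but it must be labelled as such rather than offered as a proof.

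Even taken as a reduction, the second stage has a concrete unfilled gap. You need $X \times I$ (equivalently $X \times \real$) to be a smooth CW complex in the paper's sense, i.e.\ built by pushouts along \emph{smooth} attaching maps $S_{n} \to X_{n}$ with cells that are honest disks $D^{n+1}_{\alpha}$. The product cell structure has cells $e^{m} \times (0,1)$, and their characteristic maps factor through a homeomorphism $D^{m+1} \homeo D^{m} \times I$ which is not a diffeomorphism: it must carry the smooth sphere $S^{m}$ onto the cornered boundary of $D^{m} \times I$. Hence the smoothness of the induced attaching maps is not automatic, and neither is the agreement of the $D$-topology of the product with its CW topology (the functor $D$ is a left adjoint and preserves colimits, but it is not asserted anywhere in the paper to preserve products). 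You yourself identify this as ``the technical heart'' of the reduction, but you do not supply it; until it is supplied, the relative application of Conjecture \ref{conj:main-m} to $H : X \times I \to X$ is not licensed, and the passage from continuous to smooth homotopies --- the entire content of your second stage --- does not go through.
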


\section{$D$-Topology Of Smooth CW Complex}

Since $D$ is a left adjoint functor, it preserves colimits (see \cite{MR1712872}), and we obtain

\begin{prop}\label{prop:pushout}
If $Z$ is a pushout of $f_{1}$ and $f_{2}$ in $\smoothology{}$, then $D(Z)$ is the pushout of $D(f_{1})$ and $D(f_{2})$ in $\gentop$.
Further for an expanding sequence of {\smoothologicaltxt} spaces $\{X_{n}\}_{n \ge 0}$, we obtain $D(\underset{n}\colim\,X_{n}) = \underset{n}\colim\,D(X_{n})$ in $\gentop$.
\end{prop}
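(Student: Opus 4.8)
The plan is to derive both assertions from the single fact, recalled just above the statement, that $D$ is a left adjoint and hence preserves all colimits. First I would make the adjunction explicit: the $D$-topology functor $D : \smoothology{} \to \gentop$ has a right adjoint $C : \gentop \to \smoothology{}$ which endows a space $T$ with its \emph{continuous diffeology}, namely the diffeology whose plots on $U \in \Object{\domain{}}$ are precisely the continuous maps $U \to T$. The required natural bijection $\Hom_{\gentop}(D(X),T) \cong \Hom_{\smoothology{}}(X,C(T))$ holds because the $D$-topology on $X$ is, by construction, the final topology with respect to the family of all plots of $X$: a set map $D(X) \to T$ is continuous exactly when its composite with every plot is continuous, which is exactly the condition for it to be smooth as a map $X \to C(T)$.

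Granting this, the two statements become instances of the general categorical principle that left adjoints preserve colimits. For the first, a pushout is the colimit of a span diagram $X_{1} \leftarrow X_{0} \rightarrow X_{2}$; applying the colimit-preserving functor $D$ to the pushout square defining $Z$ shows that $D(Z)$, together with the images under $D$ of the two structure maps, satisfies the universal property of the pushout of $D(f_{1})$ and $D(f_{2})$ in $\gentop$. For the second, the colimit of an expanding sequence is the colimit of the sequential diagram indexed by $(\cardinal,\le)$, so the same preservation property yields $D(\underset{n}\colim\,X_{n}) = \underset{n}\colim\,D(X_{n})$ in $\gentop$. Both conclusions thus follow formally, with no separate computation for the two cases.

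The one point that genuinely needs checking, and which I expect to be the only real obstacle, is that the whole argument takes place inside the convenient category $\gentop$ rather than the ordinary category of topological spaces: one must confirm that $\gentop$ is cocomplete and that the adjunction $D \dashv C$ is valid with $C$ landing in $\smoothology{}$ and $D$ issuing into $\gentop$, so that the abstract preservation-of-colimits lemma applies verbatim. Once the adjunction is established at the level of $\gentop$, however, nothing further is required, and the proposition is immediate.
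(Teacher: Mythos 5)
Your proposal is correct and follows essentially the same route as the paper, which simply observes (citing the literature) that $D$ is a left adjoint and therefore preserves colimits, specializing this to pushouts and sequential colimits. Your additional step of exhibiting the right adjoint $C$ (the continuous diffeology) and checking the adjunction against the final-topology description of $D(X)$ merely makes explicit what the paper leaves to the reference.
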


Now, let us recall the notion of smooth CW complex introduced in \cite{MR3991180}.
A smooth CW complex $X$ is a colimit of skeleta $\{X_{n}\}_{n\ge0}$ defined inductively as follows. 
\begin{itemize}
\item $X_{0}$ is a discrete set with discrete {\smoothologytxt}.
\vskip.5ex
\item For any $n\ge 0$, there is a smooth attaching map $h_{n}$ of all $n$-cells from a disjoint union of $n$-spheres $S_{n}=\amalg_{\alpha}\,S^{n}_{\alpha}$ to $X_{n}$ such that $X_{n+1} = X_{n} \cup_{h_{n}} \amalg_{\alpha}\, D^{n+1}_{\alpha}$, the pushout of $h_{n} : S_{n} \to X_{n}$ and the natural inclusion $S_{n} \hookrightarrow D_{n+1}=\amalg_{\alpha}\, D^{n+1}_{\alpha}$.
\end{itemize}
By definition, a smooth CW complex is naturally a topological CW complex with usual topology in $\gentop$, and a smooth manifold is also naturally a topological manifold with usual topology in $\gentop$, which we shall call their ``underlying topology''.
\begin{prop}\label{prop:smoothCWtopology}
Let $X$ be a smooth manifold, a disjoint sum of disks, or a smooth CW complex.
Then $D$-topology of $X$ is the same as the underlying topology of $X$.
\end{prop}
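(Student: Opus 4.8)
The plan is to prove the equality directly for a manifold and for a closed disk by a local chart argument, and then to obtain the smooth CW case by propagating these facts along the skeletal filtration using Proposition \ref{prop:pushout}. Recall that $U \subseteq X$ is $D$-open precisely when $P^{-1}(U)$ is open for every plot $P$ of $X$. For a manifold or a disk, the inclusion of the underlying topology into the $D$-topology is immediate: every plot is a smooth, hence continuous, map into the underlying topology, so an underlying-open set has open preimage under every plot. Thus for these two cases only the reverse inclusion, that every $D$-open set is underlying-open, needs argument.

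First I would settle the manifold case, which also covers the spheres $S^n_\alpha$ occurring below. Given a $D$-open set $U$ and a point $x \in U$, at an interior point one takes an inverse chart $\psi : V \to M$ with $V \subseteq \real^m$ open; this is a plot and a diffeomorphism onto an open neighbourhood of $x$, so $U \cap \psi(V) = \psi(\psi^{-1}(U))$ is open and, covering $U$ by such images, $U$ is underlying-open. The one genuinely nonformal point is a boundary point $x \in \partial D^{n+1}$ of a closed disk. Here I would take a half-space chart $\phi : W \cap H^{n+1} \to D^{n+1}$, with $W \subseteq \real^{n+1}$ open and $H^{n+1} = \{y : y_{n+1} \ge 0\}$, and precompose it with the fold map $\sigma(y) = (y_1,\dots,y_n,y_{n+1}^2)$. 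The composite $\phi \comp \sigma$, defined on the open set $\sigma^{-1}(W)$, is a plot whose image meets $\partial D^{n+1}$, so if $U$ is $D$-open then $\sigma^{-1}(\phi^{-1}(U))$ is open in $\real^{n+1}$. Intersecting with $\{y_{n+1} \ge 0\}$ and using that $\sigma$ restricts to a homeomorphism of $H^{n+1}$ onto itself, with continuous inverse $y_{n+1} \mapsto \sqrt{y_{n+1}}$, shows that $\phi^{-1}(U)$ is open in $H^{n+1}$, whence $U$ is an underlying neighbourhood of $x$. A disjoint sum of disks then follows at once, since $D$, being a left adjoint, preserves coproducts.

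Finally I would treat a smooth CW complex $X = \colim_n X_n$ by induction on its skeleta, where $X_0$ is discrete. Assuming $D(X_n)$ carries the CW topology, Proposition \ref{prop:pushout} identifies $D(X_{n+1})$ with the pushout in $\gentop$ of $D(h_n) : D(S_n) \to D(X_n)$ along the inclusion $D(S_n) \hookrightarrow D(D_{n+1})$; by the manifold and disk cases the three corner spaces already carry their underlying topologies, so this pushout is exactly the topological cell attachment defining $X_{n+1}$, giving $D(X_{n+1}) = X_{n+1}$ with its CW topology. Passing to the colimit and invoking the second assertion of Proposition \ref{prop:pushout} then yields $D(X) = \colim_n D(X_n) = X$. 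The point demanding care in this last step is that the pushout and colimit are formed in $\gentop$ rather than in ordinary spaces, so one must check that for the CW data at hand they agree with the classical constructions. I expect the boundary analysis for the disks to be the main obstacle; the rest is a formal combination of Proposition \ref{prop:pushout} with the standard chart argument.
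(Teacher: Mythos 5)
Your skeleton for the CW case is exactly the paper's: establish the statement for the $0$-skeleton and for disjoint sums of disks, then propagate it up the filtration via Proposition \ref{prop:pushout} applied first to each cell attachment and then to the colimit. Where you genuinely diverge is in the base cases. The paper disposes of the manifold and disk cases by citation --- \cite[4.12]{MR3025051} for manifolds and Christensen--Sinnamon--Wu \cite[Example 3.2 (1) and Lemma 3.17]{MR3270173} for the disks viewed as subsets of $\real^{n+1}$ with the sub-diffeology --- whereas you reprove them from scratch: the chart argument for interior points is standard, and your fold map $\sigma(y)=(y_{1},\dots,y_{n},y_{n+1}^{2})$, which turns a half-space chart into a plot defined on a genuinely open set and restricts to a homeomorphism of $H^{n+1}$ onto itself, is precisely the device hiding inside the cited CSW lemma. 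Both arguments are correct; yours buys self-containedness at the cost of length (and you should note that the smoothness of $\phi\comp\sigma$ uses that a boundary chart extends smoothly past $\partial D^{n+1}$, which is where the sub-diffeology of $D^{n+1}\subset\real^{n+1}$ actually enters), while the paper's citation silently covers the general locally convex subsets that CSW treat. Your closing caveat --- that the pushout and colimit in $\gentop$ must be checked to agree with the classical CW constructions --- is a point the paper also passes over without comment, so you are not missing anything the author supplies.
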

\vskip-.5ex
\begin{Pf}
For a smooth manifold or a disjoint sum of disks, the result follows easily from \cite[4.12]{MR3025051} (see Christensen-Sinnamon-Wu \cite[Example 3.2 (1) and Lemma 3.17]{MR3270173}).
For a smooth CW complex $X$ with $n$-skeleton $X_{n}$, we obtain, by \cite[2.11]{MR3025051}, that $D$-topology of $X_{0}$ is the underlying topology.
For $n \!\ge\! 0$, $D$-topology of $D_{n+1}$ is the underlying topology, since $D_{n+1}$ is a disjoint sum of disks.
Because $X_{n+1}$ is a pushout of $X_{n}$ and $D_{n+1}$, $D$-topology of $X_{n+1}$ is the pushout topology of $D(X_{n})$ and $D(D_{n+1})=\amalg_{\alpha}\,D(D^{n+1}_{\alpha})$ with the underlying topologies by Proposition \ref{prop:pushout}.
Again by Proposition \ref{prop:pushout}, we obtain that $D$-topology of $X$ is the weak topology filtered by $D(X_{n})$, which is the same as the underlying topology of the smooth CW complex $X$.
\end{Pf}

\section{Partition Of Unity}\label{sect:ParitionOfUnity}

Let $K = L \cup_{h} D_{n+1}$ be a $n$-cellular extension of a space $L$ by a smooth map $h : S_{n} \to L$ in $\diffeology{}$, where we assume that the $D$-topology $K$ is paracompact and Hausdorff.
Then $K$ is the pushout of a smooth map $h : S_{n}=\amalg_{\alpha}\, S^{n}_{\alpha} \to L$ and a smooth inclusion $\iota$ $:$ $S_{n} \hookrightarrow D_{n+1}=\amalg_{\alpha}\, D^{n+1}_{\alpha}$, and hence we have a subduction $\pi : L \amalg D_{n+1} \to K$.

Let $\mathcal{U}=\{U_{\lambda} \not=\emptyset \mid \lambda \in \Lambda\}$ be a given locally-finite $D$-open covering of $K$, $\Lambda^{D}_{\alpha}=\{\lambda \in \Lambda \mid D^{n+1}_{\alpha} \cap \pi^{-1}U_{\lambda} \not= \emptyset\}$, and $\Lambda_{L} = \{\lambda \in \Lambda \mid L \cap U_{\lambda} \not=\emptyset\}$.
Firstly, since $D^{n+1}_{\alpha}$ is a compact manifold, the locally-finite covering $\mathcal{U}^{D}_{\alpha}=\{D^{n+1}_{\alpha} \cap \pi^{-1}U_{\lambda} \mid \lambda \in \Lambda^{D}_{\alpha}\}$ is finite, and there is a smooth partition of unity $\{\rho^{D}_{\alpha,\lambda} \mid \lambda \in \Lambda^{D}_{\alpha}\}$ subordinate to $\mathcal{U}^{D}_{\alpha}$, satisfying $\supp \rho^{D}_{\alpha,\lambda} \subset \pi^{-1}U_{\lambda}$, $\lambda \in \Lambda^{D}_{\alpha}$. 
Secondly, we have a locally-finite $D$-open covering $\mathcal{U}_{L} = \{L \cap U_{\lambda} \mid \lambda \in \Lambda_{L}\}$ of $L$.
We remark that $\pi_{L} : L \to K$ is an inclusion.

\begin{prop}\label{prop:PartitionOfUnity}
If there is a smooth partition of unity $\{\rho_{\lambda} \mid \lambda \in \Lambda_{L}\}$ subordinate to $\mathcal{U}_{L}$ satisfying $\supp \rho_{\lambda} \subset L \cap U_{\lambda}$, $\lambda \in \Lambda_{L}$, then there exists a smooth partition of unity $\{\hat\rho_{\lambda} \mid \lambda \in \Lambda\}$ subordinate to $\mathcal{U}$ satisfying $\supp \hat\rho_{\lambda} \subset U_{\lambda}$, $\lambda \in \Lambda$ and $\hat\rho_{\lambda}|_{L}=\rho_{\lambda}$, $\lambda \in \Lambda_{L}$.
\end{prop}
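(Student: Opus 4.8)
The plan is to build each $\hat\rho_\lambda$ piece-by-piece on $L$ and on the disks, gluing along the subduction $\pi : L \amalg D_{n+1} \to K$; since $K$ carries the pushout (final) diffeology, a function on $K$ is smooth exactly when its restrictions to $L$ and to every $D^{n+1}_\alpha$ are smooth and agree along $h$. On $L$ I simply take $\rho_\lambda$ (and $0$ when $\lambda\notin\Lambda_L$). The work is on the disks, where I must interpolate between the given disk partition $\rho^D_{\alpha,\lambda}$ and the boundary datum $\rho_\lambda\comp h$, which need not coincide.

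Fix $D^{n+1}_\alpha$, a smooth collar $c_\alpha : S^n_\alpha\times[0,1]\to D^{n+1}_\alpha$ with $c_\alpha(s,0)=s$, and the collar retraction $\pi_\alpha$ onto $S^n_\alpha$. Choose $\sigma:[0,1]\to[0,1]$ smooth with $\sigma\equiv1$ on $[0,t_1]$ and $\supp\sigma\subset[0,t_0]$, and let $\Sigma_\alpha$ be $\sigma$ read off the collar coordinate, extended by $0$ into the interior. On $D^{n+1}_\alpha$ I set
\[
\tilde\rho_{\alpha,\lambda} \;=\; \Sigma_\alpha\cdot(\rho_\lambda\comp h\comp\pi_\alpha) \;+\; (1-\Sigma_\alpha)\cdot\rho^D_{\alpha,\lambda},
\]
the first summand read as $0$ off the collar where $\Sigma_\alpha$ vanishes, and with the conventions $\rho_\lambda=0$ for $\lambda\notin\Lambda_L$, $\rho^D_{\alpha,\lambda}=0$ for $\lambda\notin\Lambda^D_\alpha$. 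Each $\tilde\rho_{\alpha,\lambda}$ is then smooth and nonnegative; because $\sum_\lambda\rho_\lambda\equiv1$ on $L$ and $\sum_\lambda\rho^D_{\alpha,\lambda}\equiv1$ on the disk, the convex combination gives $\sum_\lambda\tilde\rho_{\alpha,\lambda}\equiv1$; and at $t=0$, where $\Sigma_\alpha=1$, it restricts to $\rho_\lambda\comp h$ on $S^n_\alpha$. The last point is the compatibility needed to glue, so $\hat\rho_\lambda := \rho_\lambda$ on $L$ and $\hat\rho_\lambda := \tilde\rho_{\alpha,\lambda}$ on each disk descends to a smooth function on $K$ with $\hat\rho_\lambda|_L=\rho_\lambda$ and $\sum_\lambda\hat\rho_\lambda\equiv1$. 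Local finiteness of the family $\{\hat\rho_\lambda\}$, hence of this sum, follows from that of $\mathcal{U}$ once the support condition is secured, since $\{\hat\rho_\lambda\neq0\}\subset U_\lambda$.

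I expect the support condition $\supp\hat\rho_\lambda\subset U_\lambda$ to be the main obstacle, and it is what dictates the two design choices above. First, on a single disk the collar term can be nonzero at interior collar points whose image under $\pi$ is not in $U_\lambda$; to kill this I choose the collar thin. As $S^n_\alpha$ is compact and $\mathcal U$ locally finite, only the finitely many $\lambda\in\Lambda^D_\alpha$ are relevant; for each, the compact set $\{s:h(s)\in\supp\rho_\lambda\}$ lies in the relatively open $\pi^{-1}U_\lambda\cap D^{n+1}_\alpha$ at $t=0$, so by compactness there is $t_0>0$ with $c_\alpha(\{s:h(s)\in\supp\rho_\lambda\}\times[0,t_0])\subset\pi^{-1}U_\lambda$; taking $t_0$ minimal over these $\lambda$ forces $\supp\tilde\rho_{\alpha,\lambda}\subset\pi^{-1}U_\lambda\cap D^{n+1}_\alpha$. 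Second --- and this is the delicate point --- the disk term $\rho^D_{\alpha,\lambda}$ may itself reach the boundary sphere at points mapping into $U_\lambda\setminus\supp\rho_\lambda$, and then the boundary traces of the disk supports could a priori pile up along the frontier of $U_\lambda$ and enlarge the support in $K$ beyond $U_\lambda$. The plateau $\sigma\equiv1$ on $[0,t_1]$ is designed precisely to switch the disk term off near $\partial D^{n+1}_\alpha$, so that in a neighborhood of the boundary $\tilde\rho_{\alpha,\lambda}$ equals $\rho_\lambda\comp h\comp\pi_\alpha$; consequently $\supp\tilde\rho_{\alpha,\lambda}\cap S^n_\alpha\subset h^{-1}(\supp\rho_\lambda)$.

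It remains to promote these per-piece supports to $\supp_K\hat\rho_\lambda\subset U_\lambda$, which I would do through the pushout topology of Proposition \ref{prop:pushout} rather than through any local finiteness of the cells (which may fail). Consider $T_\lambda := \supp\rho_\lambda\cup\bigcup_\alpha\pi(\supp\tilde\rho_{\alpha,\lambda})$, which contains $\{\hat\rho_\lambda\neq0\}$ and is visibly contained in $U_\lambda$. By the previous paragraph every boundary trace satisfies $h(\supp\tilde\rho_{\alpha,\lambda}\cap S^n_\alpha)\subset\supp\rho_\lambda$, so $T_\lambda\cap L=\supp\rho_\lambda$ is closed in $L$; and on each cell $\pi|_{D^{n+1}_\alpha}^{-1}(T_\lambda)=\supp\tilde\rho_{\alpha,\lambda}\cup(h^{-1}(\supp\rho_\lambda)\cap S^n_\alpha)$ is closed in $D^{n+1}_\alpha$. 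Hence $T_\lambda$ is closed in the pushout topology, and $\supp_K\hat\rho_\lambda\subset T_\lambda\subset U_\lambda$, completing the verification that $\{\hat\rho_\lambda\}$ is the desired partition of unity.
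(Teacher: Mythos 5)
Your construction is essentially the paper's own proof: your collar retraction $\pi_{\alpha}$ and cutoff pair $\{\Sigma_{\alpha},\,1-\Sigma_{\alpha}\}$ play exactly the roles of the radial projection $p_{\alpha}$ on the collar $S^{n}_{\alpha,\epsilon}$ and the partition $\{\chi^{S}_{\alpha},\chi^{D}_{\alpha}\}$ there, your $\epsilon$-thinning of the collar by compactness of $S^{n}_{\alpha}$ and finiteness of $\Lambda^{S}_{\alpha}$ is the same support argument, and your blend $\tilde\rho_{\alpha,\lambda}=\Sigma_{\alpha}\cdot(\rho_{\lambda}\comp h\comp\pi_{\alpha})+(1-\Sigma_{\alpha})\cdot\rho^{D}_{\alpha,\lambda}$ is precisely the paper's $\hat\rho^{D}_{\alpha,\lambda}=\chi^{S}_{\alpha}\cdot\rho^{S}_{\alpha,\lambda}+\chi^{D}_{\alpha}\cdot\rho^{D}_{\alpha,\lambda}$. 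The only point where you go beyond the paper is the explicit verification, via closedness of $T_{\lambda}$ in the pushout topology, that $\supp\hat\rho_{\lambda}\subset U_{\lambda}$ holds in $K$ itself, a step the paper dispatches with ``clearly''.
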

\begin{proof}
Let $\Lambda^{S}_{\alpha} = \{\lambda \in \Lambda \mid S^{n}_{\alpha} \cap \pi^{-1}U_{\lambda} \not=\emptyset\}$ and then we obtain a finite open covering $\mathcal{U}^{S}_{\alpha}=\{\pi^{-1}U_{\lambda} \mid \lambda \in \Lambda^{S}_{\alpha}\}$ of $S^{n}_{\alpha}$ in $D^{n+1}_{\alpha}$ such that $h_{\alpha}^{-1} \supp \rho_{\lambda} \subset \pi^{-1}U_{\lambda}$, $\lambda \in \Lambda^{S}_{\alpha}$, where $h_{\alpha}=h|_{S^{n}_{\alpha}}$.
Since $S^{n}_{\alpha}$ is compact and $\Lambda^{S}_{\alpha}$ is finite, there exists a positive number $\epsilon\!=\!\epsilon_{\alpha} \ll 1$ such that the $\epsilon$-neighbourhood of $h_{\alpha}^{-1} \supp \rho_{\lambda}$ in $D^{n+1}_{\alpha}$ is included in $\pi^{-1}U_{\lambda}$, $\lambda \in \Lambda^{S}_{\alpha}$.

Let $S^{n}_{\alpha,\epsilon} := \{\mathbold{v} \!\in\! D^{n+1}_{\alpha} \mid \Vert{\mathbold{v}}\Vert > 1{-}{\epsilon} \}$ and let smooth functions $\{\rho^{S}_{\alpha,\lambda} \mid \lambda \in \Lambda^{S}_{\alpha}\}$ on $S^{n}_{\alpha,\epsilon}$ be defined, for $\mathbold{v} \in S^{n}_{\alpha,\epsilon}$, by 
$$\textstyle
\rho^{S}_{\alpha,\lambda}(\mathbold{v}) = \begin{cases}
\rho_{\lambda} \comp h_{\alpha} \comp p_{\alpha}(\mathbold{v}),& \text{if $\mathbold{v} \in p_{\alpha}^{-1}h_{\alpha}^{-1}U_{\lambda}$}, 
\\
0,&\text{otherwise,}
\end{cases}
$$
where $p_{\alpha}$ is defined by $p_{\alpha}(\mathbold{v}) = \textstyle\frac{\mathbold{v}}{\Vert{\mathbold{v}}\Vert}$ for $\mathbold{v} \in S^{n}_{\alpha,\epsilon}$.
Here, $\frac{\mathbold{v}}{\Vert{\mathbold{v}}\Vert} \in h_{\alpha}^{-1}\supp \rho_{\lambda}$ implies that $\mathbold{v} \in \pi^{-1}U_{\lambda}$, since $\Vert{\mathbold{v}{-}\frac{\mathbold{v}}{\Vert{\mathbold{v}}\Vert}}\Vert = 1{-}\Vert{\mathbold{v}}\Vert < \epsilon$, $\lambda \in \Lambda^{S}_{\alpha}$.
Thus $\supp \rho^{S}_{\alpha,\lambda} \subset p^{-1}_{\alpha} h^{-1}_{\alpha} \supp \rho_{\lambda} \subset S^{n}_{\alpha,\epsilon} \cap \pi^{-1}U_{\lambda}$, $\lambda \in \Lambda^{S}_{\alpha}$.
By definition, we also obtain $\sum_{\lambda \in \Lambda^{S}_{\alpha}} \rho^{S}_{\alpha,\lambda}(\mathbold{v}) = \sum_{\lambda \in \Lambda^{S}_{\alpha}}\rho_{\lambda}(h_{\alpha} \comp p_{\alpha}(\mathbold{v})) = 1$, and hence $\{\rho^{S}_{\alpha,\lambda}\}$ is a partition of unity subordinate to $\mathcal{U}^{S}_{\alpha,\epsilon}=\{S^{n}_{\alpha,\epsilon} \cap \pi^{-1}U_{\lambda} \mid \lambda \in \Lambda^{S}_{\alpha}\}$. 

On the other hand, $\{S^{n}_{\alpha,\epsilon}, \Int{D^{n+1}_{\alpha}}\}$ is an open covering of $D^{n+1}_{\alpha}$, and we have a partition of unity $\{\chi^{S}_{\alpha},\chi^{D}_{\alpha}\}$ on $D^{n+1}_{\alpha}$, such that $\supp \chi^{S}_{\alpha} \subset S^{n}_{\alpha,\epsilon}$ and $\supp \chi^{D}_{\alpha} \subset \Int{D^{n+1}_{\alpha}}$.
We define a set of smooth functions $\{\hat\rho^{D}_{\alpha,\lambda} \mid \lambda \in \Lambda\}$ on $D^{n+1}_{\alpha}$ by the following formula:
$$
\hat\rho^{D}_{\alpha,\lambda} = \begin{cases}
\chi^{D}_{\alpha} \cdot \rho^{D}_{\alpha,\lambda} + \chi^{S}_{\alpha} \cdot \rho^{S}_{\alpha,\lambda}, & \text{if $\lambda \in \Lambda^{S}_{\alpha}$,}
\\
\chi^{D}_{\alpha} \cdot \rho^{D}_{\alpha,\lambda}, & \text{if $\lambda \in \Lambda^{D}_{\alpha} \smallsetminus \Lambda^{S}_{\alpha}$,}
\\
0,&\text{otherwise.}
\end{cases}
$$
Thus on $D^{n+1}_{\alpha}$, we have $\sum_{\lambda \in \Lambda^{D}_{\alpha}} \hat\rho^{D}_{\alpha,\lambda} = \chi^{D}_{\alpha} \cdot \sum_{\lambda \in \Lambda^{D}_{\alpha}}\rho^{D}_{\alpha,\lambda} + \chi^{D}_{\alpha} \cdot \sum_{\lambda \in \Lambda^{S}_{\alpha}}\rho^{S}_{\alpha,\lambda} = \chi^{D}_{\alpha} + \chi^{S}_{\alpha} = 1$, and hence $\{\hat\rho^{D}_{\alpha,\lambda} \mid \lambda \in \Lambda\}$ gives a smooth partition of unity subordinate to $\mathcal{U}^{D}_{\alpha}$ satisfying $\supp \hat\rho^{D}_{\alpha,\lambda} \subset \supp \rho^{D}_{\alpha,\lambda} \cup \supp \rho^{D}_{\alpha,\lambda} \subset \pi^{-1}U_{\lambda}$, $\lambda \in \Lambda^{D}_{\alpha}$.
Further by definition, we have
$$
\hat\rho^{D}_{\alpha,\lambda}\vert_{S^{n}_{\alpha}} = \begin{cases}
\chi^{S}_{\alpha} \cdot \rho^{S}_{\alpha,\lambda}\vert_{S^{n}_{\alpha}} = \rho^{S}_{\alpha,\lambda}\vert_{S^{n}_{\alpha}} = \rho_{\lambda} \comp h_{\alpha}, & \text{if $\lambda \in \Lambda^{S}_{\alpha}$,}
\\
0 = \rho_{\lambda} \comp h_{\alpha},&\text{otherwise.}
\end{cases}
$$
In each case, we have $\hat\rho^{D}_{\alpha,\lambda}\vert_{S^{n}_{\alpha}} =  \rho_{\lambda} \comp h_{\alpha}$, and hence we obtain a smooth function $\hat\rho_{\lambda}$ on $K$ given by $\hat\rho_{\lambda}|_{L}=\begin{cases}\rho_{\lambda},&\lambda \in \Lambda_{L}\\0,&\text{otherwise}\end{cases}$ and $\hat\rho_{\lambda} \comp \pi|_{D^{n+1}_{\alpha}} = \begin{cases}\hat\rho^{D}_{\alpha,\lambda},& \lambda \in \Lambda^{D}_{\alpha}\\0,&\text{otherwise}\end{cases}$.
They clearly give a smooth partition of unity subordinate to $\mathcal{U}$ satisfying $\supp \hat\rho_{\lambda} \subset U_{\lambda}$, $\lambda \in \Lambda$.
\end{proof}

By inductively using the above proposition, we obtain the following result.

\begin{thm}\label{thm:PartitionOfUnity}
For any $D$-open covering of a smooth CW complex, there exists a partition of unity subordinate to the covering.
\end{thm}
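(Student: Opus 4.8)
The plan is to reduce to a locally finite covering, build the partition of unity one skeleton at a time by feeding the output of each stage into Proposition \ref{prop:PartitionOfUnity}, and then glue the resulting functions over the colimit $X = \colim X_n$. Throughout I would use Proposition \ref{prop:smoothCWtopology} to identify the $D$-topology of $X$ with its underlying CW topology, which is paracompact and Hausdorff; the same applies to each skeleton.

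First I would reduce to the locally finite case. Given an arbitrary $D$-open covering $\mathcal{W} = \{W_j\}_{j\in J}$, paracompactness yields a locally finite $D$-open refinement $\mathcal{U} = \{U_\lambda\}_{\lambda\in\Lambda}$ together with a map $\tau : \Lambda \to J$ satisfying $U_\lambda \subset W_{\tau(\lambda)}$. It then suffices to produce a smooth partition of unity $\{\hat\rho_\lambda\}_{\lambda\in\Lambda}$ subordinate to $\mathcal{U}$ with $\supp\hat\rho_\lambda \subset U_\lambda$: setting $\sigma_j = \sum_{\tau(\lambda)=j}\hat\rho_\lambda$ produces a smooth partition of unity subordinate to $\mathcal{W}$, since local finiteness makes each $\sigma_j$ a locally finite sum and gives $\supp\sigma_j = \bigcup_{\tau(\lambda)=j}\supp\hat\rho_\lambda \subset W_j$.

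To build $\{\hat\rho_\lambda\}$ I would induct on the skeleta. On the discrete set $X_0$ every real-valued function is smooth, so assigning each point to a single index $\lambda$ with the point in $U_\lambda$ gives a partition of unity on $X_0$ subordinate to $\{U_\lambda \cap X_0\}$ with the required support condition. For the inductive step, $X_{n+1} = X_n \cup_{h_n}\amalg_\alpha D^{n+1}_\alpha$ is exactly an $n$-cellular extension as in Proposition \ref{prop:PartitionOfUnity}, with $L = X_n$: its $D$-topology is paracompact Hausdorff, the restriction of the locally finite family $\mathcal{U}$ to $X_{n+1}$ is again locally finite, and the inductive hypothesis supplies a smooth partition of unity on $X_n$ subordinate to $\{U_\lambda\cap X_n\}$ with supports inside the $U_\lambda$. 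The proposition then yields a smooth partition of unity on $X_{n+1}$, subordinate to $\mathcal{U}$, whose members restrict on $X_n$ to the ones already constructed. The compatibility clause $\hat\rho_\lambda|_{X_n} = \rho_\lambda$ is precisely what makes the construction coherent across all stages.

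Passing to the colimit is where care is needed, and I expect it to be the main obstacle. For each $\lambda$ the stagewise functions form a compatible family over $\{X_n\}$, so by the universal property of the colimit $X = \colim X_n$ in the category of diffeological spaces they glue to a single smooth function $\hat\rho_\lambda : X \to \real$; smoothness is thus automatic, and $\sum_\lambda \hat\rho_\lambda = 1$ holds pointwise since every point lies in some $X_n$. The delicate point is the support bound $\supp \hat\rho_\lambda \subset U_\lambda$ (support taken in $X$): although $\{\hat\rho_\lambda \neq 0\} \subset U_\lambda$, the closure of a subset of an open set need not remain inside it. Here I would use that $X$ is a CW complex, so that every compact subset, in particular every finite subcomplex, lies in some skeleton $X_m$, whence the closure of any set is the union of its closures taken inside finite subcomplexes. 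For a finite subcomplex $K \subset X_m$ the closure of $\{\hat\rho_\lambda\neq0\}\cap K$ is contained in $\supp(\hat\rho_\lambda|_{X_m}) \subset U_\lambda$ by the stagewise bound of Proposition \ref{prop:PartitionOfUnity}; taking the union over all such $K$ gives $\supp \hat\rho_\lambda \subset U_\lambda$. Finally, since $\mathcal{U}$ is locally finite and each $\supp \hat\rho_\lambda \subset U_\lambda$, the family $\{\supp \hat\rho_\lambda\}$ is locally finite, so $\{\hat\rho_\lambda\}$ is a genuine smooth partition of unity subordinate to $\mathcal{U}$, which completes the argument.
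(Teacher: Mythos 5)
Your overall strategy coincides with the paper's: pass to a locally finite refinement, build the partition of unity skeleton by skeleton via Proposition \ref{prop:PartitionOfUnity} using the compatibility clause $\hat\rho_\lambda|_{X_n}=\rho_\lambda$, and glue over $X=\underset{n}\colim X_n$. The inductive construction and the gluing (smoothness from the colimit, $\sum_\lambda\hat\rho_\lambda=1$ pointwise, local finiteness of the supports) are all fine.

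There is, however, a genuine gap in the step you yourself flag as delicate. The claim that in a CW complex ``the closure of any set is the union of its closures taken inside finite subcomplexes'' is false, and it does not follow from the fact that compact sets lie in finite subcomplexes: the union $\bigcup_m \Cl\,(S\cap X_m)$ need not be closed. For a counterexample, take $X_1=[0,1]$ and for each $n\ge 2$ attach an $n$-cell by the constant map to the point $1/n$; let $S_n$ be a sequence in the open $n$-cell accumulating at $1/n$ and $S=\bigcup_n S_n$. Then $\bigcup_m\Cl\,(S\cap X_m)=S\cup\{1/n : n\ge 2\}$, whose closure also contains $0$. The same phenomenon defeats your support bound: from $\supp(\hat\rho_\lambda|_{X_m})\subset U_\lambda\cap X_m$ for every $m$ you may conclude $\{\hat\rho_\lambda\neq 0\}\subset U_\lambda$ and hence $\supp\hat\rho_\lambda\subset\Cl{U_\lambda}$, but not $\supp\hat\rho_\lambda\subset U_\lambda$; the global support can pick up points of $\Cl{U_\lambda}\smallsetminus U_\lambda$ that are approached through higher and higher skeleta. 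Since your reduction to the locally finite case only arranges $U_\lambda\subset W_{\tau(\lambda)}$, the conclusion $\supp\sigma_j\subset W_j$ is then unjustified.

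The repair is exactly what the paper does at the outset: because $D(X)$ is paracompact and Hausdorff, choose the locally finite refinement $\mathcal{U}=\{U_\lambda\}$ so that already $\Cl{U_\lambda}$ is contained in some member of the original covering (the shrinking lemma). Then the bound $\supp\hat\rho_\lambda\subset\Cl{U_\lambda}$, which is immediate from $\{\hat\rho_\lambda\neq0\}\subset U_\lambda$, is all you need, and no claim about closures commuting with the skeletal filtration is required. With that single modification your argument goes through and is essentially the paper's proof.
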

\begin{proof}
Let $\mathcal{V}$ be a given $D$-open covering of $X$.
Then, since $D(X)$ is paracompact and Hausdorff, there is a locally-finite  refinement $\mathcal{U}=\{U_{\lambda} \mid \lambda \in \Lambda\}$ of $\mathcal{V}$ such that, for any $\lambda$, there is  $V \in \mathcal{V}$ satisfying $\Cl{U_{\lambda}} \subset V$.
So it is sufficient to show the existence of a smooth partition of unity $\{\rho_{\lambda} \mid \lambda \in \Lambda\}$ subordinate to $\mathcal{U}$ satisfying $\supp \rho_{\lambda} \subset \Cl{U_{\lambda}}$, $\lambda \in \Lambda$.

Let $\Lambda_{n} = \{\lambda \in \Lambda \mid U_{\lambda} \cap X_{n} \not= \emptyset\}$ and $\mathcal{U}_{n}=\{U_{\lambda} \cap X_{n} \mid \lambda \in \Lambda_{n}\}$.
Since $(\Lambda_{n+1})_{X_{n}}=\Lambda_{n}$ and $(\mathcal{U}_{n+1})_{X_{n}}=\mathcal{U}_{n}$, we obtain a series of smooth partitions of unity $\{\rho^{n}_{\lambda} \mid \lambda \in \Lambda_{n}\}$ on $X_{n}$ the $n$-skeleton of $X$, for all $n \ge 0$, such that $\rho^{n+1}_{\lambda}|X_{n}=\rho^{n}_{\lambda}$, $\lambda \in \Lambda_{n}$, by inductively using Proposition \ref{prop:PartitionOfUnity}.
Then, for any $\lambda \in \Lambda$, $\rho^{n}_{\lambda}$ exists for sufficiently large $n \ge 0$, and $\{\rho^{n}_{\lambda}\}$ gives a smooth function $\rho_{\lambda}$ on $X$, satisfying $\supp \rho_{\lambda} = \Cl{(\bigcup_{n} \supp \rho^{n}_{\lambda})} \subset \Cl{(\bigcup_{n} X_{n} \cap U_{\lambda})} = \Cl{U_{\lambda}}$.
We also have, by definition, $\sum_{\lambda}\rho_{\lambda}=1$ on $X_{n}$ for all $n \ge 0$, $X = \underset{n}\colim X_{n}$, and hence $\{\rho^{n}_{\lambda}\}$ is a partition of unity on $X$ subordinate to $\mathcal{V}$.
\end{proof}

We say a diffeological space has enough many smooth functions, if its $D$-topology has an open base of the form $\pi^{-1}(]0,1[)$. By Theorem \ref{thm:PartitionOfUnity}, we clearly obtain the following.

\begin{cor}
A smooth CW complex has enough many smooth functions.
\end{cor}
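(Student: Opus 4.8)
The plan is to verify the definition directly: for a smooth CW complex $X$, a $D$-open set $U$, and a point $x \in U$, I would exhibit a smooth function $\pi : X \to \real$ with $x \in \pi^{-1}(]0,1[) \subset U$. Once this is achieved for every such $U$ and $x$, the sets $\pi^{-1}(]0,1[)$ form an open base for the $D$-topology, which is exactly the assertion that $X$ has enough many smooth functions.

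First I would observe that, by Proposition \ref{prop:smoothCWtopology}, the $D$-topology of $X$ agrees with its underlying CW topology, and the latter is Hausdorff; hence $\{x\}$ is closed and $X \smallsetminus \{x\}$ is $D$-open. Thus $\mathcal{V} = \{U,\, X \smallsetminus \{x\}\}$ is a $D$-open covering of $X$, to which Theorem \ref{thm:PartitionOfUnity} associates a smooth partition of unity $\{\phi_{i}\}$ subordinate to $\mathcal{V}$.

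Next, since $\sum_{i} \phi_{i}(x) = 1$, there is an index $i_{0}$ with $\phi_{i_{0}}(x) > 0$. Subordinacy means each $\supp \phi_{i}$ lies in $U$ or in $X \smallsetminus \{x\}$; as $x \in \supp \phi_{i_{0}}$, the second option is excluded, so $\supp \phi_{i_{0}} \subset U$. Finally I would set $\pi = \tfrac{1}{2}\,\phi_{i_{0}}$. Because $0 \le \phi_{i_{0}} \le 1$ we have $0 \le \pi \le \tfrac{1}{2} < 1$, so that $\pi^{-1}(]0,1[) = \{\phi_{i_{0}} > 0\}$, which is $D$-open (smooth functions are $D$-continuous), contains $x$, and is contained in $\supp \phi_{i_{0}} \subset U$. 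This completes the verification.

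The construction is routine once Theorem \ref{thm:PartitionOfUnity} is available, which is why ``clearly'' is apt; the only two points needing care are that $X \smallsetminus \{x\}$ is $D$-open, resting on Hausdorffness of the CW topology through Proposition \ref{prop:smoothCWtopology}, and the harmless rescaling by $\tfrac{1}{2}$, which is precisely what turns $\{\phi_{i_{0}} > 0\} = \pi^{-1}(]0,\infty[)$ into a set of the prescribed form $\pi^{-1}(]0,1[)$.
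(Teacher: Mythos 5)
Your argument is correct and is exactly the intended route: the paper derives this corollary directly from Theorem \ref{thm:PartitionOfUnity} with no further detail (``we clearly obtain''), and your covering $\{U, X\smallsetminus\{x\}\}$ together with the rescaling $\pi=\tfrac12\phi_{i_0}$ is the natural filling-in of that one-line justification. The two points you flag --- $D$-openness of $X\smallsetminus\{x\}$ via Proposition \ref{prop:smoothCWtopology} and Hausdorffness, and the rescaling to land in $]0,1[$ --- are indeed the only details worth recording.
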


\section{Proof of Theorem \ref{thm:main}}\label{sect:ProofMainTheorem}

For a {\smoothologicaltxt} space $X$, we denote by $\Paths(X)$ the mapping space of all smooth maps from $\real$ to $X$, following  \cite{MR3025051}.
Since $\smoothology{}$ is cartesian-closed, taking adjoint gives a natural bijection as follows.
\vspace{-.5ex}
$$
\Morphism{\smoothology{}}(X \times \real,Y) \ni f \overset{g=\ad{f}}\longleftrightarrow g \in \Morphism{\smoothology{}}(X,\Paths(Y)),
$$
where the map $\ad{f} : X \to \Paths(Y)$ is defined by $\ad{f}(x)(t)=f(x,t)$, \,$(x,t) \in X \times \real$ for a map $f : X \times \real \to Y$ in $\smoothology{}$.

When $A$ is an empty set, since a manifold $M$ is of dimension $d$, then a continuous map from $M$ to a smooth CW complex $X$ can be continuously compressed into $X_{d}$ in $\gentop$.
When $A$ is a non-empty set, we must assume either that $M$ is compact or that $X$ is of finite dimension.
In case when $M$ is compact, $\Im{f}$ is compact in $X$, and hence $\Im{f} \subset X_{d}$ for some $d \!\ge\! 0$.
In case when $X$ is of finite dimension, we have $\Im{f} \subset X=X_{d}$ for $d \!=\! \dim{X}$.
In either case, we may assume that $\Im{f} \subset X_{d}$ for some $d \!\ge\! 0$.
To proceed further, we use induction on $d \!\ge\! 0$.
Since it is clear when $d=0$, we assume that we have done in the case when $d \!\le\! n$, from now on.

Let $\tau =\fracinline1/6$, $\eta = \fracinline{1}/{12}$, $0\!<\!\varepsilon\!\le\!\fracinline1/6$, and $\lambda : \real \to \real$ be a smooth function satisfying 
\begin{enumerate}
\item $\lambda(t) = 0$ \ if \ $t \le \varepsilon$, 
\item $\lambda$ is an increasing monotone function on $(\varepsilon,1\!-\!\varepsilon)$,
\item $\lambda(1{-}t)=1\!-\!\lambda(t)$ \ for all $t \in \real$.
\end{enumerate}
Firstly, let $\{e^{n+1}_{\alpha} ; \alpha {\in} \Lambda\}$ be the set of $n{+}1$-cells of $X_{n+1}$, and $X_{n+1}=X_{n} \cup_{h_{n}} D_{n+1}$, where $(D_{n+1},S_{n})$ $=$ $(\amalg_{\alpha}\,D^{n+1}_{\alpha},\amalg_{\alpha\in\Lambda}\,S^{n}_{\alpha})$ and $h_{n} : S_{n} \to X_{n}$ is the smooth attaching map of all $n{+}1$ cells. 
Then there is a smooth characteristic map $\chi_{n+1} : (D_{n+1},S_{n}) \to (X_{n+1},X_{n})$ in $\smoothology{}$.
We define subsets of $X_{n+1}$ in $\gentop$ as $O:=\amalg_{\alpha}\,O_{\alpha}$, $U^{(t)}:=\amalg_{\alpha}\,U^{(t)}_{\alpha}$, $U:=U^{(0)}=\amalg_{\alpha}\,U_{\alpha}$, $V^{(t)}:=X_{n} \cup_{h_{n}} \amalg_{\alpha}\,V^{(t)}_{\alpha}$ and $V:=V^{(0)}=X_{n} \cup_{h_{n}} \amalg_{\alpha}\,V_{\alpha}$, for $\vert{t}\vert<\fracinline{1}/{2}$, where 
the subsets $O_{\alpha}$, \,$U_{\alpha}$, \,$U^{(t)}_{\alpha}$, \,$V_{\alpha}$ and $V^{(t)}_{\alpha}$ of $X_{n+1}$ are defined as follows: 
$$\begin{array}{l}
O_{\alpha}:=D^{n+1}_{\alpha} \smallsetminus S^{n}_{\alpha} \homeo e^{n+1}_{\alpha},
\qquad%
U^{(t)}_{\alpha} := \{\mathbold{x} \in D^{n+1}_{\alpha} \mid \Vert{\mathbold{x}}\Vert{<}\fracinline{1}/{2}{+}t\},\qquad U_{\alpha}:=U^{(0)}_{\alpha},
\\[1ex]
V^{(t)}_{\alpha} := \{\mathbold{x} \in D^{n+1}_{\alpha} \mid \Vert{\mathbold{x}}\Vert{>}\fracinline{1}/{2}{+}t\},\qquad V_{\alpha}:=V^{(0)}_{\alpha}.
\end{array}$$\vskip.5ex
Then Proposition \ref{prop:smoothCWtopology} tells us the following.
\begin{prop}
$O$, $U$, $U^{(t)}$, $V$, $V^{(t)}$ for $\vert{t}\vert\!<\!\fracinline{1}/{2}$, are $D$-open subsets of $X_{n+1}$.
\end{prop}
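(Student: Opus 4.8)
The plan is to use the fact that $X_{n+1}$ is a pushout, so that its $D$-topology is the final topology along the subduction $\pi : X_{n} \amalg D_{n+1} \to X_{n+1}$. By Proposition~\ref{prop:pushout}, $D(X_{n+1})$ is the pushout in $\gentop$ of $D(h_{n})$ and the natural inclusion $D(S_{n}) \hookrightarrow D(D_{n+1})$; hence a subset $W \subset X_{n+1}$ is $D$-open precisely when $\pi^{-1}(W)$ is open in $X_{n} \amalg D_{n+1}$. Since the coproduct carries the disjoint-union topology, this splits into two conditions: the trace $W \cap X_{n}$ is open in $D(X_{n})$, and the preimage $\chi_{n+1}^{-1}(W)$ is open in $D(D_{n+1})$. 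By Proposition~\ref{prop:smoothCWtopology} these two $D$-topologies coincide with the underlying topologies of the smooth CW complex $X_{n}$ and of the disjoint sum of disks $D_{n+1}$, so I may verify openness in the ordinary sense. The whole proof then reduces to computing, for each of the five sets, its trace on $X_{n}$ and its preimage under the characteristic map $\chi_{n+1}$.

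First I would treat the three sets lying inside the open cells. For $|t|<\fracinline{1}/{2}$ one has $\Vert\mathbold{x}\Vert<\fracinline{1}/{2}{+}t<1$ on $U^{(t)}_{\alpha}$ and $\Vert\mathbold{x}\Vert<1$ on $O_{\alpha}$, so each of $O$, $U$, $U^{(t)}$ is contained in $\amalg_{\alpha}e^{n+1}_{\alpha} = X_{n+1}\smallsetminus X_{n}$; thus its trace on $X_{n}$ is empty, hence open. Since $\chi_{n+1}$ restricts to a homeomorphism $D^{n+1}_{\alpha}\smallsetminus S^{n}_{\alpha}\homeo e^{n+1}_{\alpha}$, the preimages are $\chi_{n+1}^{-1}(O)=\amalg_{\alpha}(D^{n+1}_{\alpha}\smallsetminus S^{n}_{\alpha})$ and $\chi_{n+1}^{-1}(U^{(t)})=\amalg_{\alpha}U^{(t)}_{\alpha}$, each an open subset of $D_{n+1}$ (an open disk, respectively an open ball of radius $\fracinline{1}/{2}{+}t$). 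Both conditions hold, so these three sets are $D$-open.

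Finally I would treat the outer sets $V$ and $V^{(t)}$, where the boundary sphere is the only delicate point. Since $V^{(t)}=X_{n}\cup_{h_{n}}\amalg_{\alpha}V^{(t)}_{\alpha}$ contains all of $X_{n}$, its trace on $X_{n}$ is $X_{n}$ itself, which is open. For the second condition note that every boundary point $\mathbold{x}\in S^{n}_{\alpha}$ has $\chi_{n+1}(\mathbold{x})=h_{n}(\mathbold{x})\in X_{n}\subset V^{(t)}$, while an interior point with $\Vert\mathbold{x}\Vert>\fracinline{1}/{2}{+}t$ lands in the open-cell part of $V^{(t)}$; hence $\chi_{n+1}^{-1}(V^{(t)})=\amalg_{\alpha}\{\mathbold{x}\in D^{n+1}_{\alpha}\mid \Vert\mathbold{x}\Vert>\fracinline{1}/{2}{+}t\}=\amalg_{\alpha}V^{(t)}_{\alpha}$. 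The key observation, and the one step that is not entirely routine, is that it is exactly the inclusion of $X_{n}$ into $V^{(t)}$ that pulls the whole boundary sphere into this preimage, so that $\chi_{n+1}^{-1}(V^{(t)})$ is the full half-open collar $\{\Vert\mathbold{x}\Vert>\fracinline{1}/{2}{+}t\}$ rather than the non-open set $\{\fracinline{1}/{2}{+}t<\Vert\mathbold{x}\Vert<1\}$. As $\{\Vert\mathbold{x}\Vert>c\}$ is open in the closed disk $D^{n+1}_{\alpha}$ (viewed as a manifold with boundary) for every $c<1$, both conditions hold and $V$, $V^{(t)}$ are $D$-open, completing the argument.
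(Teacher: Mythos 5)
Your proof is correct and follows the same route the paper intends: the paper dispatches this proposition in one line by invoking Proposition \ref{prop:smoothCWtopology} (that the $D$-topology of $X_{n+1}$ coincides with the underlying CW topology), and your argument is exactly the unpacking of that citation, reducing $D$-openness via the pushout/subduction $\pi$ to openness of the trace on $X_{n}$ and of the preimage under $\chi_{n+1}$, and then checking each of the five sets. The only difference is that you supply the routine verification that the paper leaves implicit, including the one genuinely worth recording point that $\chi_{n+1}^{-1}(V^{(t)})$ contains the whole boundary sphere because $h_{n}(S_{n}) \subset X_{n} \subset V^{(t)}$.
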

Let $\mathcal{U}=\{f^{-1}(U^{(2\tau)}),f^{-1}(V^{(\tau)})\}$ be an open covering of $M$.  Since $M$ is a manifold, there is a smooth partition of unity $\rho_{U},\,\rho_{V} : M \to [0,1]$ subordinate to $\mathcal{U}$, i.e., $\rho_{U}$ and $\rho_{V}$ are smooth functions satisfying $\supp\rho_{U} \subset f^{-1}(U^{(2\tau)})$, $\supp{\rho_{V}} \subset f^{-1}(V^{(\tau)})$ and $\rho_{U}+\rho_{V}=1$ on $M$.
By the hypothesis of Theorem \ref{thm:main}, we have an open superset $B \subset M$ of $A$, on which $f$ is smooth in $\smoothology{}$.
For any $\alpha\!\in\!\Lambda$, by Theorem \ref{thm:main-oe} for a continuous map $f\vert_{f^{-1}(U^{(2\tau)}_{\alpha})} : f^{-1}(U^{(2\tau)}_{\alpha}) \to U^{(2\tau)}_{\alpha} \!\subset\! \real^{n+1}$ and a constant function $\kappa : f^{-1}(U^{(2\tau)}_{\alpha}) \ni x \to \tau$, there is a smooth function $g_{\alpha} : f^{-1}(U^{(2\tau)}_{\alpha}) \to \real^{n+1}$, such that $d(g_{\alpha}(x),f(x))<\kappa(x)=\tau$ for all $x \in f^{-1}(U^{(2\tau)}_{\alpha})$, and that $g_{\alpha}=f$ on the closed subset $A_{\alpha}=f^{-1}(U^{(2\tau)}_{\alpha}) \cap A$ of $f^{-1}(U^{(2\tau)}_{\alpha})$.
We define a map $g_{1} : M \to X_{n+1}$ by 
$$
g_{1}(x) = 
\begin{cases}\,
\rho_{U}(x){\cdot}g_{\alpha}(x) + \rho_{V}(x){\cdot}f(x),&x \in f^{-1}(U^{(2\tau)}_{\alpha}) 
\supset O_{\alpha}\cap\supp{\rho_{U}}, \ \ \alpha \in \Lambda,
\\[.5ex]\,
f(x),& x \in M \smallsetminus \supp{\rho_{U}} \supset M \smallsetminus f^{-1}(U^{(2\tau)}).
\end{cases}
$$
\begin{prop}\label{prop:g-1-openset}
$f^{-1}(U^{(2\tau)}) \subset g_{1}^{-1}(O)=f^{-1}(O)$ and $f^{-1}(V^{(\tau)}) \subset g_{1}^{-1}(V)$.
\end{prop}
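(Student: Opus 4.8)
The plan is to unwind the definition of $g_{1}$ and argue pointwise, partitioning $M$ by whether or not $x \in \supp\rho_{U}$: off $\supp\rho_{U}$ one has $g_{1}(x)=f(x)$, while on $f^{-1}(U^{(2\tau)}_{\alpha})$ one has $g_{1}(x)=\rho_{U}(x)g_{\alpha}(x)+\rho_{V}(x)f(x)$. The one estimate that drives every case is obtained from $\rho_{U}+\rho_{V}=1$: writing $g_{1}(x)-f(x)=\rho_{U}(x)\bigl(g_{\alpha}(x)-f(x)\bigr)$ and using $\rho_{U}(x)\le1$ together with $d(g_{\alpha}(x),f(x))<\tau$ gives $\Vert g_{1}(x)-f(x)\Vert<\tau=\fracinline1/6$ on each $f^{-1}(U^{(2\tau)}_{\alpha})$. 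I would record at the outset that $U^{(2\tau)}_{\alpha}=\{\Vert{\mathbold{x}}\Vert<\fracinline1/2+2\tau=\fracinline5/6\}\subset O_{\alpha}$, so that $\supp\rho_{U}\subset f^{-1}(U^{(2\tau)})\subset f^{-1}(O)$, and that each $f(x)\in U^{(2\tau)}_{\alpha}$ lies in a single open cell $O_{\alpha}$.

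For the identity $g_{1}^{-1}(O)=f^{-1}(O)$, I would treat the two regions separately. Off $\supp\rho_{U}$ the maps $g_{1}$ and $f$ coincide, so their preimages of $O$ agree there. On $\supp\rho_{U}$ the value $f(x)$ lies in some $O_{\alpha}$ with $\Vert f(x)\Vert<\fracinline5/6$; the key estimate then forces $\Vert g_{\alpha}(x)\Vert\le\Vert f(x)\Vert+\fracinline1/6<1$, so both $g_{\alpha}(x)$ and $f(x)$ sit in the convex open disk $O_{\alpha}$, and hence so does their convex combination $g_{1}(x)$. Thus $\supp\rho_{U}\subset f^{-1}(O)\cap g_{1}^{-1}(O)$, and combining the two regions yields $g_{1}^{-1}(O)=f^{-1}(O)$. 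The remaining inclusion $f^{-1}(U^{(2\tau)})\subset g_{1}^{-1}(O)$ is then immediate from $U^{(2\tau)}\subset O$.

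For $f^{-1}(V^{(\tau)})\subset g_{1}^{-1}(V)$ I would split according to where $f(x)$ lands. If $f(x)\in X_{n}$, then $f(x)\notin O\supset U^{(2\tau)}$, so $x\notin\supp\rho_{U}$ and $g_{1}(x)=f(x)\in X_{n}\subset V$. If instead $f(x)\in V^{(\tau)}_{\alpha}$, i.e. $\Vert f(x)\Vert>\fracinline1/2+\tau=\fracinline2/3$, then either $x\notin\supp\rho_{U}$, whence again $g_{1}(x)=f(x)\in V_{\alpha}\subset V$, or $x\in\supp\rho_{U}$, in which case $f(x)\in U^{(2\tau)}_{\alpha}$ and the key estimate gives $\Vert g_{1}(x)\Vert\ge\Vert f(x)\Vert-\fracinline1/6>\fracinline2/3-\fracinline1/6=\fracinline1/2$, so $g_{1}(x)\in V_{\alpha}\subset V$.

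The points I would be most careful about are two. First, the convex combination defining $g_{1}$ must be formed inside a single cell, so I must invoke that the open cells $O_{\alpha}$ are pairwise disjoint and that $f(x)\in U^{(2\tau)}$ singles out one index $\alpha$; this is what legitimises the Euclidean norm estimates. Second, the numerology is tight rather than generous: the choices $2\tau=\fracinline1/3$ and $\tau=\fracinline1/6$ are exactly what make $\fracinline5/6+\fracinline1/6=1$ and $\fracinline2/3-\fracinline1/6=\fracinline1/2$, so the borderline values land precisely in the open sets and no slack is available. The only genuinely non-Euclidean case is $f(x)\in X_{n}$, where the argument rests entirely on $\supp\rho_{U}\subset f^{-1}(U^{(2\tau)})$ to force $g_{1}(x)=f(x)$; I expect establishing that containment cleanly to be the main conceptual hinge, even though the estimates do the visible work.
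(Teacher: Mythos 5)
Your proof is correct and follows essentially the same route as the paper: a pointwise case analysis driven by the estimate $d(g_{1}(x),f(x))\le\rho_{U}(x)\,d(g_{\alpha}(x),f(x))<\tau$ together with the numerology $5\tau+\tau=6\tau=1$ and $4\tau-\tau=3\tau=\fracinline1/2$. Your convex-combination phrasing of the membership $g_{1}(x)\in O_{\alpha}$ is just a repackaging of the paper's triangle-inequality bound $\Vert g_{1}(x)\Vert\le\Vert f(x)\Vert+d(g_{1}(x),f(x))<1$, and the containment $\supp\rho_{U}\subset f^{-1}(U^{(2\tau)})$ you flag as the ``hinge'' is simply part of the construction of the partition of unity, so nothing further needs to be established there.
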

\vskip-.5ex
\begin{Pf}
Firstly, if $x \in f^{-1}(U^{(2\tau)}_{\alpha})$, then $\Vert{f(x)}\Vert < 5\tau$ and hence $\Vert{g_{1}(x)}\Vert \le \Vert{f(x)}\Vert + d(g_{1}(x),f(x)) < 5\tau \!+\! \tau = 6\tau = 1$ which implies $x \in g_{1}^{-1}(O_{\alpha})$.
Thus we obtain $f^{-1}(U^{(2\tau)}) \subset g_{1}^{-1}(O)$.
Secondly, if $x \in f^{-1}(O_{\alpha} \smallsetminus U^{(2\tau)}_{\alpha})$, then $g_{1}(x) = f(x) \in O_{\alpha} \smallsetminus U^{(2\tau)}_{\alpha}$, and hence $f^{-1}(O) = f^{-1}(O \smallsetminus U^{(2\tau)}) \cup f^{-1}(U^{(2\tau)}) \subset g_{1}^{-1}(O \smallsetminus U^{(2\tau)}) \cup g_{1}^{-1}(O) = g_{1}^{-1}(O)$.
Conversely if $x \in M \smallsetminus f^{-1}(O)$, then $g_{1}(x)=f(x) \in X_{n}$, and hence $x \in M \smallsetminus g_{1}^{-1}(O)$.
Thus $f^{-1}(O) = g_{1}^{-1}(O)$.
Thirdly, if $x \in f^{-1}(V^{(\tau)}_{\alpha} \cap U^{(2\tau)}_{\alpha})$, then $4\tau <\Vert{f(x)}\Vert < 5\tau$ and $\fracinline{1}/{2}=3\tau < \Vert{g_{1}(x)}\Vert <6\tau=1$ which implies $x \in g_{1}^{-1}(V_{\alpha} \cap O_{\alpha})$.
On the other hand, if $x \in f^{-1}(V^{(\tau)} \smallsetminus U^{(2\tau)})$, $g_{1}(x)=f(x) \in V^{(\tau)} \smallsetminus U^{(2\tau)}$, and hence $f^{-1}(V^{(\tau)}) = f^{-1}(V^{(\tau)} \cap U^{(2\tau)}) \cup f^{-1}(V^{(\tau)} \smallsetminus U^{(2\tau)}) \subset g_{1}^{-1}(V \cap O) \cup g_{1}^{-1}(V \smallsetminus U^{(2\tau)}) = g_{1}^{-1}(V)$.
\end{Pf}
\begin{cor}\label{cor:g-1-openset}
\begin{enumerate}
\item $f^{-1}(X_{n})=M \smallsetminus f^{-1}(O)=M \smallsetminus g_{1}^{-1}(O)=g_{1}^{-1}(X_{n})$.
\item $M \smallsetminus \supp{\rho_{U}} \supset  M \smallsetminus f^{-1}(U^{(2\tau)}) = f^{-1}(\Cl{(V^{(2\tau)})}) \supset f^{-1}(V^{(2\tau)}) \supset f^{-1}(X_{n}) = g_{1}^{-1}(X_{n})$.
\item Since $f^{-1}(V^{(\tau)}) \supset \supp{\rho_{V}} \supset f^{-1}(X_{n})$, Proposition 4.2 implies $M \smallsetminus \supp{\rho_{V}} \supset f^{-1}(O) \smallsetminus f^{-1}(V^{(\tau)}) \supset g_{1}^{-1}(O) \smallsetminus g_{1}^{-1}(V) = g_{1}^{-1}(O \smallsetminus V) = g_{1}^{-1}(\Cl{U})$.
\end{enumerate}
\end{cor}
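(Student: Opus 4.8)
The plan is to read Corollary~\ref{cor:g-1-openset} as three chains of set inclusions and to verify each link separately; the only non-formal ingredients are Proposition~\ref{prop:g-1-openset}, the support conditions $\supp\rho_{U}\subset f^{-1}(U^{(2\tau)})$ and $\supp\rho_{V}\subset f^{-1}(V^{(\tau)})$ on the partition of unity, and the set-level splitting $X_{n+1}=X_{n}\amalg O$, which holds because $O=\amalg_{\alpha}(D^{n+1}_{\alpha}\smallsetminus S^{n}_{\alpha})$ is exactly the union of open cells, i.e.\ the complement of $X_{n}$ in $X_{n+1}$. Everything else is cellwise computation with the radius regions $U^{(t)}_{\alpha}=\{\Vert{\mathbold{x}}\Vert<\tfrac12+t\}$ and $V^{(t)}_{\alpha}=\{\Vert{\mathbold{x}}\Vert>\tfrac12+t\}$.

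For (1), I would use that, since $X_{n+1}=X_{n}\amalg O$, any map into $X_{n+1}$ sends the preimage of $X_{n}$ to the complement of the preimage of $O$; applied to $f$ and to $g_{1}$ this yields the outer equalities $f^{-1}(X_{n})=M\smallsetminus f^{-1}(O)$ and $M\smallsetminus g_{1}^{-1}(O)=g_{1}^{-1}(X_{n})$, while the middle equality $f^{-1}(O)=g_{1}^{-1}(O)$ is precisely Proposition~\ref{prop:g-1-openset}.

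For (2), the first inclusion is the support condition $\supp\rho_{U}\subset f^{-1}(U^{(2\tau)})$ read through complements. The equality $M\smallsetminus f^{-1}(U^{(2\tau)})=f^{-1}(\Cl{(V^{(2\tau)})})$ I would reduce to the cellwise identity $X_{n+1}\smallsetminus U^{(2\tau)}=\Cl{(V^{(2\tau)})}$: in each $D^{n+1}_{\alpha}$ the complement of $\{\Vert{\mathbold{x}}\Vert<\tfrac12+2\tau\}$ is $\{\Vert{\mathbold{x}}\Vert\ge\tfrac12+2\tau\}$, which together with $X_{n}$ is exactly the closure of $V^{(2\tau)}=X_{n}\cup_{h_{n}}\amalg_{\alpha}V^{(2\tau)}_{\alpha}$. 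The next two links are the trivial $V^{(2\tau)}\subset\Cl{(V^{(2\tau)})}$ and $X_{n}\subset V^{(2\tau)}$, and the last equality is (1).

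For (3), I expect the only step needing a genuine argument to be the asserted inclusion $\supp\rho_{V}\supset f^{-1}(X_{n})$: here I would observe that $\rho_{U}$ vanishes on $f^{-1}(X_{n})$, because $\supp\rho_{U}\subset f^{-1}(U^{(2\tau)})$ while $U^{(2\tau)}\subset O$ is disjoint from $X_{n}$, so that $\rho_{V}=1-\rho_{U}=1$ there. Granting this, taking complements in $\supp\rho_{V}\subset f^{-1}(V^{(\tau)})$ gives $M\smallsetminus\supp\rho_{V}\supset f^{-1}(O)\smallsetminus f^{-1}(V^{(\tau)})$; Proposition~\ref{prop:g-1-openset}, which supplies $f^{-1}(O)=g_{1}^{-1}(O)$ and $f^{-1}(V^{(\tau)})\subset g_{1}^{-1}(V)$, upgrades this to $\supset g_{1}^{-1}(O)\smallsetminus g_{1}^{-1}(V)=g_{1}^{-1}(O\smallsetminus V)$, and I would finish with the cellwise identity $O\smallsetminus V=\Cl{U}$, since in each cell $\{\Vert{\mathbold{x}}\Vert<1\}\smallsetminus\{\Vert{\mathbold{x}}\Vert>\tfrac12\}=\{\Vert{\mathbold{x}}\Vert\le\tfrac12\}=\Cl{U_{\alpha}}$. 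The main obstacle is nothing deep but purely bookkeeping: keeping track of which regions lie in the open cells and which meet $X_{n}$, so that every closure and complement is taken in $X_{n+1}$ rather than inside a single disk.
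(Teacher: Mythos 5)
Your proposal is correct and follows exactly the route the paper intends: the corollary is stated without a separate proof, being treated as immediate from Proposition~\ref{prop:g-1-openset}, the support conditions on $\rho_{U},\rho_{V}$, and the set-level decomposition $X_{n+1}=X_{n}\amalg O$, which are precisely the ingredients you use. Your explicit verification of the links the paper leaves implicit (in particular $X_{n+1}\smallsetminus U^{(2\tau)}=\Cl{(V^{(2\tau)})}$, $\supp\rho_{V}\supset f^{-1}(X_{n})$ via $\rho_{V}=1-\rho_{U}=1$ on $f^{-1}(X_{n})$, and $O\smallsetminus V=\Cl{U}$) is accurate.
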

\begin{prop}\label{prop:g-1-properties}
\begin{enumerate}
\item\label{prop:g-1-properties-1} $f$ is homotopic to $g_{1}$ relative to $(M \smallsetminus \supp{\rho_{U}}) \cup A$.
\item\label{prop:g-1-properties-2} $g_{1}$ is smooth on an open superset $B$ of $A$ in $\smoothology{}$.
\item\label{prop:g-1-properties-3} $M \smallsetminus \supp{\rho_{V}} \subset g_{1}^{-1}(O)$ and $g_{1}\vert_{M \smallsetminus \supp{\rho_{V}}}$ is smooth in the ordinary sense.
\end{enumerate}
\end{prop}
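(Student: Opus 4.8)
The plan is to treat the three assertions separately, all of them resting on the pointwise description of $g_{1}$ together with two facts already at hand. First, on $f^{-1}(U^{(2\tau)}_{\alpha})$ we have $\Vert f(x)\Vert < 5\tau$ (as in the proof of Proposition \ref{prop:g-1-openset}), whence $\Vert g_{\alpha}(x)\Vert \le \Vert f(x)\Vert + \tau < 6\tau = 1$, so every convex combination of $f(x)$ and $g_{\alpha}(x)$ still lies in the open cell $O_{\alpha} \homeo e^{n+1}_{\alpha}$. Second, $\rho_{U}+\rho_{V}=1$, so on $f^{-1}(U^{(2\tau)}_{\alpha})$ the value $g_{1}=\rho_{U}\,g_{\alpha}+\rho_{V}\,f$ is a genuine convex combination computed inside the $\real^{n+1}$-coordinates of the single cell $\alpha$ (recall $U^{(2\tau)}=\amalg_{\alpha}U^{(2\tau)}_{\alpha}$ is a disjoint union lying strictly inside the open cells, since $5\tau<1$), while $g_{1}=f$ off $\supp\rho_{U}$.

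For (1), I would use the straight-line homotopy reparametrised by $\lambda$: set $H(x,s)=\rho_{U}(x)\,[(1{-}\lambda(s))\,f(x)+\lambda(s)\,g_{\alpha}(x)]+\rho_{V}(x)\,f(x)$ for $x\in f^{-1}(U^{(2\tau)}_{\alpha})$, and $H(x,s)=f(x)$ for $x\in M\smallsetminus\supp\rho_{U}$. These two prescriptions agree on their overlap (where $\rho_{U}=0$) and are defined on the open cover $\{f^{-1}(U^{(2\tau)}),\,M\smallsetminus\supp\rho_{U}\}$ of $M$ (whose union is $M$ because $\supp\rho_{U}\subset f^{-1}(U^{(2\tau)})$), so $H$ is a well-defined continuous map on $M\times\real$; the convexity estimate keeps $\Vert H(x,s)\Vert<1$, so $H$ indeed takes values in $X_{n+1}$. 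Since $\lambda(0)=0$ and $\lambda(1)=1$ one gets $H(\,\cdot\,,0)=f$ and $H(\,\cdot\,,1)=g_{1}$; on $M\smallsetminus\supp\rho_{U}$ we have $H(x,s)=f(x)$ for all $s$, and on $A_{\alpha}=A\cap f^{-1}(U^{(2\tau)}_{\alpha})$ the equality $g_{\alpha}=f$ gives $(1{-}\lambda(s))f+\lambda(s)g_{\alpha}=f$ and again $H(x,s)=f(x)$; hence $H$ is relative to $(M\smallsetminus\supp\rho_{U})\cup A$. The reparametrisation also supplies the collar condition required of a continuous homotopy in $\gentop$.

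For (2), I would cover the open set $B$ by $B\cap f^{-1}(U^{(2\tau)})$ and $B\smallsetminus\supp\rho_{U}$, whose union is $B$ because $\supp\rho_{U}\subset f^{-1}(U^{(2\tau)})$. On the first, $g_{1}=\rho_{U}\,g_{\alpha}+\rho_{V}\,f$ is a sum of products of smooth $\real^{n+1}$-valued maps ($\rho_{U},\rho_{V}$ are smooth, $g_{\alpha}$ is smooth by construction, and $f$ is smooth on $B$); since $f^{-1}(U^{(2\tau)}_{\alpha})$ maps into the open cell, where the characteristic map $\chi_{n+1}$ restricts to a smooth embedding, $g_{1}$ is smooth there as a map into $X_{n+1}$. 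On the second, $g_{1}=f$ is smooth because $f$ is smooth on $B$. As the two formulas agree on the overlap, $g_{1}$ is smooth on all of $B$ in $\smoothology{}$.

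For (3), the key observation is that $\rho_{V}=0$ forces $\rho_{U}=1$, so no point of $M\smallsetminus\supp\rho_{V}$ can lie in $M\smallsetminus f^{-1}(U^{(2\tau)})\subset M\smallsetminus\supp\rho_{U}$ (there both $\rho_{U}$ and $\rho_{V}$ vanish, contradicting $\rho_{U}+\rho_{V}=1$); hence $M\smallsetminus\supp\rho_{V}\subset f^{-1}(U^{(2\tau)})$, and on this open set $g_{1}=g_{\alpha}$. Then $\Vert g_{\alpha}\Vert<1$ gives $M\smallsetminus\supp\rho_{V}\subset g_{1}^{-1}(O)$, and $g_{1}=g_{\alpha}$ is smooth in the ordinary sense as a map into the open disc $O_{\alpha}$. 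I expect the only real care — the main obstacle — to be the bookkeeping of the norm estimates, so as to guarantee that every convex combination stays strictly inside the open cells (ensuring the maps genuinely land in $X_{n+1}$ and that the cell coordinates are legitimately available), together with checking that the two-set open covers used in (1) and (2) really patch the local formulas into globally well-defined continuous, respectively smooth, maps.
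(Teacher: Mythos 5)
Your proposal is correct and follows essentially the same route as the paper: your homotopy $\rho_{U}\,[(1{-}\lambda)f+\lambda g_{\alpha}]+\rho_{V}f$ expands to exactly the paper's $(1{-}\lambda)f+\lambda g_{1}$, and parts (2) and (3) use the same open-cover patching and the same norm estimates (the paper reaches $M\smallsetminus\supp\rho_{V}\subset g_{1}^{-1}(O)$ via $g_{1}^{-1}(O)=f^{-1}(O)$ and $\supp\rho_{V}\supset f^{-1}(X_{n})$ rather than your direct use of $\rho_{U}+\rho_{V}=1$, but that is a cosmetic difference).
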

\vskip-.5ex
\begin{Pf}
(\ref{prop:g-1-properties-1}):
A map $H_{1} : M \times \real \to X_{n+1}$ is given, for $t \in \real$, by using Corollary \ref{cor:g-1-openset} as:
\begin{align*}&
H_{1}(x,t) = \begin{cases}\,
(1{-}\lambda(t)){\cdot}f(x) + \lambda(t){\cdot}g_{1}(x),& x \in f^{-1}(U^{(2\tau)}) \supset \supp{\rho_{U}},
\\[.5ex]\,
f(x),& x \in M \smallsetminus \supp{\rho_{U}} \supset f^{-1}(\Cl{(V^{(2\tau)})}).
\end{cases}
\end{align*}
If $x \!\in\! (M \smallsetminus \supp{\rho_{U}}) \cap f^{-1}(U^{(2\tau)})$, then by definition, we obtain that $g_{1}(x) = f(x)$ and $(1{-}\lambda(t)){\cdot}f(x) + \lambda(t){\cdot}g_{1}(x)=f(x)$ for all $t \in \real$.
Thus $H_{1}$ is a well-defined continuous homotopy from $f$ to $g_{1}$.
By the hypothesis on $A$, we also have $f=g_{1}$ on $A$ and hence the homotopy is defined to be relative to $(M \smallsetminus \supp{\rho_{U}}) \cup A$.
\vskip.5ex\par(\ref{prop:g-1-properties-2}):
We know that $f$ is smooth on $B$ in $\smoothology{}$ and that $g_{\alpha}$ is smooth on $f^{-1}(U^{(2\tau)}_{\alpha})$ by definition.
Hence both of a map $\rho_{V}{\cdot}f+\rho_{U}{\cdot}g_{\alpha}$ on $B \cap f^{-1}(U^{(2\tau)}_{\alpha})$ for any $\alpha$ and a map $f$ on $B \cap (M \smallsetminus \supp{\rho_{U}})$ are smooth in $\smoothology{}$.
Since $\{B \cap f^{-1}(U^{(2\tau)}_{\alpha})\}_{\alpha\in\Lambda} \amalg \{B \cap (M \smallsetminus \supp{\rho_{U}})\}$ is an open covering of $B$, $g_{1}$ is smooth on $B$ in $\smoothology{}$.
\vskip.5ex\par(\ref{prop:g-1-properties-3}):
By definition, $g_{1}$ agrees with $g_{\alpha}$ on $f^{-1}(O_{\alpha}) \smallsetminus \supp{\rho_{V}}$ for all $\alpha \in \Lambda$.
Since $\supp{\rho_{V}} \supset f^{-1}(X_{n})$, we have $M \smallsetminus \supp{\rho_{V}} = f^{-1}(O) \smallsetminus \supp{\rho_{V}} = g_{1}^{-1}(O) \smallsetminus \supp{\rho_{V}} = \amalg_{\alpha}\,(f^{-1}(O_{\alpha}) \smallsetminus \supp{\rho_{V}})$ on which $g_{1}=\amalg_{\alpha}\,g_{\alpha}$ is smooth in the ordinary sense.
\end{Pf}

Next, we choose a smooth partition of unity $\{\rho'_{U}, \rho'_{V}\}$ subordinate to $\mathcal{U}'=\{U, V^{(-\tau)} \cap D_{n+1}\}$ an open covering of $D_{n+1}$, in other words, $\rho'_{U}$ and $\rho'_{V}$ are smooth functions satisfying $\supp{\rho'_{U}} \subset U$, $\supp{\rho'_{V}} \subset V^{(-\tau)}$ and $\rho'_{U}+\rho'_{V}=1$ on $D_{n+1}$.
Then a continuous map $K_{n+1} : D_{n+1} \times \real \to D_{n+1}$ is given, for $t \in \real$, by using Corollary \ref{cor:g-1-openset} as:
$$
K_{n+1}(y,t) = \begin{cases}\,
(1{+}\lambda(t)){\cdot}y \in O,& y \in D_{n+1} \smallsetminus \supp{\rho'_{V}}, 
\\[1ex]\,
((1{+}\lambda(t))\rho'_{U}(y)+\frac{(1{-}\lambda(t))\Vert{y}\Vert{+}\lambda(t)}{\Vert{y}\Vert}\rho'_{V}(y)){\cdot}y \in D_{n+1},& y \in U \cap V^{(-\tau)},
\\[1ex]\,
\frac{(1{-}\lambda(t))\Vert{y}\Vert{+}\lambda(t)}{\Vert{y}\Vert}{\cdot}y \in D_{n+1},& y \in D_{n+1} \smallsetminus \supp{\rho'_{U}}.
\end{cases}
$$
If $y \in U \supset D_{n+1} \smallsetminus \supp{\rho'_{V}}$, then $\Vert{y}\Vert < \fracinline1/2$, and $(1{+}\lambda(t)){\cdot}y \in O$. If $y \in V_{(-\tau)} \supset D_{n+1} \smallsetminus \supp{\rho'_{U}}$, then $\fracinline1/3 < \Vert{y}\Vert\le1$, and by $\Vert{y}\Vert \le {(1{-}\lambda(t))\Vert{y}\Vert{+}\lambda(t)}\le1$, $\frac{(1{-}\lambda(t))\Vert{y}\Vert{+}\lambda(t)}{\Vert{y}\Vert}{\cdot}y\in D_{n+1}$.
Hence, if $y \in U \cap V^{(-\tau)}$, $((1{+}\lambda(t))\rho'_{U}(y)+\frac{(1{-}\lambda(t))\Vert{y}\Vert{+}\lambda(t)}{\Vert{y}\Vert}\rho'_{V}(y)){\cdot}y \in D_{n+1}$.

Further, if $y \in (U \cap V^{(-\tau)}) \cap (D_{n+1} \smallsetminus \supp{\rho'_{V}})$, then $\rho'_{V}(y)=0$, $\rho'_{U}(y)=1{-}\rho'_{V}(y)=1$ and $(1{+}\lambda(t))\rho'_{U}(y)+\frac{(1{-}\lambda(t))\Vert{y}\Vert{+}\lambda(t)}{\Vert{y}\Vert}\rho'_{V}(y)$ $=$ $1{+}\lambda(t)$. Also if $y \in (U \cap V^{(-\tau)}) \cap (D_{n+1} \smallsetminus \supp{\rho'_{U}})$, then $\rho'_{U}(y)=0$, $\rho'_{V}(y)$ $=$ $1{-}\rho'_{U}(y)$ $=$ $1$ and $(1{+}\lambda(t))\rho'_{U}(y)+\frac{(1{-}\lambda(t))\Vert{y}\Vert{+}\lambda(t)}{\Vert{y}\Vert}\rho'_{V}(y)$ $=$ $\frac{(1{-}\lambda(t))\Vert{y}\Vert{+}\lambda(t)}{\Vert{y}\Vert}$.

By definition, we have that $K_{n+1}(y,0)=y$ for all $y \in D_{n+1}$, and $K_{n+1}(y,1)=\frac{1}{\Vert{y}\Vert}{\cdot}y$, \ if \ $y \in D_{n+1} \smallsetminus \supp{\rho'_{U}} \supset D_{n+1} \cap \Cl{V} \supset S_{n}$.
In particular, $K_{n+1}(y,1)=y$ for any $y \in S_{n}$.

We define $k_{n+1} : (D_{n+1},S_{n}) \to (D_{n+1},S_{n})$ by $k_{n+1}(y)=K_{n+1}(y,1)$ for $y \in D_{n+1}$.
Then $K_{n+1}$ is a well-defined homotopy from $\id_{D_{n+1}}$ to $k_{n+1}$ with the following property.
\begin{prop}\label{prop:K-properties}
$K_{n+1}$ is a smooth deformation of $D_{n+1}$ relative to $S_{n}$.
\end{prop}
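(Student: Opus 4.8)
The plan is to establish the three ingredients of the assertion separately: that $K_{n+1}$ starts at the identity, that it fixes $S_{n}$ pointwise for every $t$, and that it is smooth in $\smoothology{}$. The first two are direct evaluations. At $t=0$, property (1) of $\lambda$ gives $\lambda(0)=0$, and each of the three branches of the definition collapses to $y$, so $K_{n+1}(\cdot,0)=\id_{D_{n+1}}$. For the relative condition, a point $y \in S_{n}$ has $\Vert{y}\Vert=1>\fracinline1/2$, hence $y \notin \supp{\rho'_{U}} \subset U$, so the third branch applies and the third branch gives $K_{n+1}(y,t)=\frac{(1{-}\lambda(t))\Vert{y}\Vert{+}\lambda(t)}{\Vert{y}\Vert}{\cdot}y=y$, since $\Vert{y}\Vert=1$, for all $t$.

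The substance of the proof is the smoothness of $K_{n+1} : D_{n+1} \times \real \to D_{n+1}$, which I would obtain from the locality axiom (D2) of diffeologies. First I would record that $A_{1}:=D_{n+1}\smallsetminus\supp{\rho'_{V}}$, $A_{2}:=U\cap V^{(-\tau)}$ and $A_{3}:=D_{n+1}\smallsetminus\supp{\rho'_{U}}$ form an open cover of $D_{n+1}$: a point lying in neither $A_{1}$ nor $A_{3}$ lies in $\supp{\rho'_{U}}\cap\supp{\rho'_{V}}\subset U\cap V^{(-\tau)}=A_{2}$, using $\supp{\rho'_{U}}\subset U$ and $\supp{\rho'_{V}}\subset V^{(-\tau)}$. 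Consequently $\{A_{i}\times\real\}_{i=1,2,3}$ is an open cover of $D_{n+1}\times\real$, and the three branches of the formula defining $K_{n+1}$ are exactly its restrictions to these three sets.

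Next I would verify that each branch is smooth on its domain. The first branch $(1{+}\lambda(t)){\cdot}y$ is a polynomial in the coordinates of $y$ with $t$-dependent coefficients that are smooth through $\lambda$, so it is smooth on all of $A_{1}\times\real$, including at the origin, which lies in $A_{1}$ because $0\notin\supp{\rho'_{V}}\subset V^{(-\tau)}$. The second and third branches contain the factor $\frac{1}{\Vert{y}\Vert}$, but on $A_{2}$ and on $A_{3}\subset\supp{\rho'_{V}}\subset V^{(-\tau)}$ one has $\Vert{y}\Vert>\fracinline1/2{-}\tau=\fracinline1/3>0$, so this factor, and hence the whole branch, is smooth there; the image lands in $D_{n+1}$ by the computations preceding the statement, so these are smooth plots into $D_{n+1}$. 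This is exactly where the cover must be chosen with care: the pole of $\frac{1}{\Vert{y}\Vert}$ at the origin is the only possible failure of smoothness, and the construction confines the two branches carrying that pole to the annular region $\Vert{y}\Vert>\fracinline1/3$, leaving the origin to the polynomial branch alone. I expect this isolation of the singular denominator to be the main obstacle; once it is arranged, every branch is a routine smooth expression.

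Finally, the three branches agree on overlaps, which is precisely the computation preceding the statement: on $A_{1}\cap A_{2}$ one has $\rho'_{V}=0$ and $\rho'_{U}=1$, so the middle formula reduces to the first, while on $A_{2}\cap A_{3}$ one has $\rho'_{U}=0$ and $\rho'_{V}=1$, so it reduces to the third, and $A_{1}\cap A_{3}=\emptyset$ because $\rho'_{U}+\rho'_{V}=1$ forbids both from vanishing at a common point. Pulling the cover $\{A_{i}\times\real\}$ back along an arbitrary plot of $D_{n+1}\times\real$ then exhibits $K_{n+1}$ composed with that plot as locally a plot of $D_{n+1}$, so (D2) upgrades this local smoothness to smoothness of $K_{n+1}$ on all of $D_{n+1}\times\real$. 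Together with the identity at $t=0$ and the fixing of $S_{n}$, this shows $K_{n+1}$ is a smooth deformation of $D_{n+1}$ relative to $S_{n}$.
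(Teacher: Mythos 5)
Your proof is correct and takes essentially the same route as the paper's: the paper's one-line argument (smoothness of the two coefficient functions $(1{+}\lambda(t))\rho'_{U}(y)$ and $\frac{(1{-}\lambda(t))\Vert{y}\Vert{+}\lambda(t)}{\Vert{y}\Vert}\rho'_{V}(y)$, with $\rho'_{V}$ supported away from the origin so as to tame the $1/\Vert{y}\Vert$ singularity) is just a compressed form of your three-set cover, overlap check, and appeal to the locality axiom (D2), and your overlap computations are exactly the ones the paper records in the paragraph preceding the proposition. If anything you are slightly more careful than the printed text, since you verify $K_{n+1}(y,t)=y$ for all $t$ when $y \in S_{n}$, not only at $t=1$.
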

\vskip-.5ex
\begin{Pf}
Since $\rho'_{U}$ and $\rho'_{V}$ are smooth, so are $(1{+}\lambda(t))\rho'_{U}(y)$ and $\frac{(1{-}\lambda(t))\Vert{y}\Vert{+}\lambda(t)}{\Vert{y}\Vert}\rho'_{V}(y)$, and hence $K_{n+1}$ is smooth.
Other parts of the statement are clear by definition.
\end{Pf}
The smooth map $k_{n+1} : (D_{n+1},S_{n}) \to (D_{n+1},S_{n})$ is given as follows.
$$
k_{n+1}(y) = K_{n+1}(y,1) = \begin{cases}\,
2{\cdot}y \in O,& y \in D_{n+1} \smallsetminus \supp{\rho'_{V}}, 
\\[.5ex]\,
2\rho'_{U}(y)+\frac{1}{\Vert{y}\Vert}\rho'_{V}(y)){\cdot}y \in D_{n+1},& y \in U \cap V^{(-\tau)},
\\[.5ex]\,
\frac{1}{\Vert{y}\Vert}{\cdot}y \in S_{n},& y \in D_{n+1} \smallsetminus \supp{\rho'_{U}}.
\end{cases}
$$

We know that $X_{n+1}$ is defined with smooth maps $\chi_{n+1} : D_{n+1} \to X_{n+1}$ and $\incl_{X_{n}} : X_{n} \hookrightarrow X_{n+1}$ as a pushout of smooth maps $\incl_{S_{n}} : S_{n} \hookrightarrow D_{n+1}$ and $h_{n} : S_{n} \rightarrow X_{n}$ in $\smoothology{}$, where they satisfy the equation $\chi_{n+1}\vert_{S^{n}}=\incl_{X_{n}} \comp h_{n}$.

Since $K_{n+1}$ is smooth in $\smoothology{}$ by Proposition \ref{prop:K-properties}, so is its adjoint $\ad{K_{n+1}} : D_{n+1} \to \Paths(D_{n+1})$, since $\smoothology{}$ is cartesian-closed.
We extend $\ad{K_{n+1}}$ and $k_{n+1}$ to obtain smooth maps $\ad{\widetilde{K}_{n+1}} : X_{n+1} \to \Paths(X_{n+1})$ and $\widetilde{k}_{n+1} : X_{n+1} \to X_{n+1}$ which are determined by the following data.
\begin{align*}&
\ad{\widetilde{K}_{n+1}} \comp \chi_{n+1}=(\chi_{n+1})_{\ast} \comp \ad{K_{n+1}} : D_{n+1} \to \Paths(D_{n+1}) \to \Paths(X_{n+1}), 
\\[0ex]&
\ad{\widetilde{K}_{n+1}}\vert_{X_{n}}={\incl_{X_{n}}}_{\ast} \comp \!\const{} : X_{n} \to \Paths(X_{n}) \hookrightarrow \Paths(X_{n+1}),
\\[.5ex]&
\widetilde{k}_{n+1} \comp \chi_{n+1}=\chi_{n+1} \comp k_{n+1} : D_{n+1} \to D_{n+1} \to X_{n+1}, \quad\text{and} 
\\[0ex]&
\widetilde{k}_{n+1}\vert_{X_{n}}=\incl_{X_{n}} : X_{n} \hookrightarrow X_{n+1},
\end{align*}
where $\const{} : Y \to \Paths(Y)$ sends $y \in Y$ to $\const{y} \in \Paths(Y)$ the constant path at $y$. 

Since ${\chi_{n+1}}_{\ast} \comp \ad{K_{n+1}}\vert_{S_{n}} = {\chi_{n+1}}_{\ast} \comp \!\const{}\vert_{S_{n}} = {(\chi_{n+1}\vert_{S_{n}})}_{\ast} \comp \!\const{} = {(\incl_{X_{n}} \comp h_{n})}_{\ast} \comp \!\const{} = {\incl_{X_{n}}}_{\ast} \comp \!\const{} \comp h_{n}$ and $\chi_{n+1} \comp k_{n+1}\vert_{S_{n}} = \chi_{n+1}\vert_{S_{n}} = \incl_{X_{n}} \comp h_{n}$, maps $\ad{\widetilde{K}_{n+1}}$ and $\widetilde{k}_{n+1}$ are well-defined and smooth in $\smoothology{}$.
Thus we have a smooth map $\widetilde{K}_{n+1} : X_{n+1} \times \real \to X_{n+1}$ the adjoint of $\ad{\widetilde{K}_{n+1}}$ in $\smoothology{}$.
For a map $g'_{1}:=\widetilde{k}_{n+1} \comp g_{1} : M \to X_{n+1}$, we obtain 
\begin{prop}\label{prop:g-1'-properties}
$\widetilde{K}_{n+1}$ is a smooth homotopy from $\id_{X_{n+1}}$ to $\widetilde{k}_{n+1}$ in $\smoothology{}$.
\end{prop}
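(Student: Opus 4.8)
Since the preceding paragraph already establishes that $\widetilde{K}_{n+1}$ is smooth in $\smoothology{}$ as the adjoint of the smooth map $\ad{\widetilde{K}_{n+1}}$ (by cartesian-closedness), the only thing that remains is to verify the two endpoint conditions in the definition of a homotopy fixed in the introduction: I must check that $\widetilde{K}_{n+1}(x,t)=x$ for $t \le \varepsilon$ and $\widetilde{K}_{n+1}(x,t)=\widetilde{k}_{n+1}(x)$ for $t \ge 1{-}\varepsilon$, for every $x \in X_{n+1}$.

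First I would pin down $\lambda$ at the endpoints. Property (1) gives $\lambda(t)=0$ for $t \le \varepsilon$, and combining (1) with (3) gives, for $t \ge 1{-}\varepsilon$, that $\lambda(1{-}t)=0$ and hence $\lambda(t)=1{-}\lambda(1{-}t)=1$. Substituting $\lambda(t)=0$ into each of the three branches of the formula for $K_{n+1}$ collapses them all to $K_{n+1}(y,t)=y$, so $K_{n+1}(\cdot,t)=\id_{D_{n+1}}$ for $t \le \varepsilon$; substituting $\lambda(t)=1$ reproduces the displayed formula for $k_{n+1}$, so $K_{n+1}(\cdot,t)=k_{n+1}$ for $t \ge 1{-}\varepsilon$.

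Next I would unwind the adjoint on the two pieces that define $\ad{\widetilde{K}_{n+1}}$, using $\ad{f}(x)(t)=f(x,t)$. For $x=\chi_{n+1}(y)$ with $y \in D_{n+1}$, the relation $\ad{\widetilde{K}_{n+1}} \comp \chi_{n+1}=(\chi_{n+1})_{\ast} \comp \ad{K_{n+1}}$ yields $\widetilde{K}_{n+1}(\chi_{n+1}(y),t)=\chi_{n+1}(K_{n+1}(y,t))$; for $x \in X_n$, the relation $\ad{\widetilde{K}_{n+1}}\vert_{X_{n}}={\incl_{X_{n}}}_{\ast} \comp \const{}$ sends $x$ to the constant path at $x$, so $\widetilde{K}_{n+1}(x,t)=x$ for all $t$. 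Feeding in the endpoint values of $K_{n+1}$: for $t \le \varepsilon$ the cell piece gives $\widetilde{K}_{n+1}(\chi_{n+1}(y),t)=\chi_{n+1}(y)$ while the $X_n$ piece is already the identity, so $\widetilde{K}_{n+1}(\cdot,t)=\id_{X_{n+1}}$; for $t \ge 1{-}\varepsilon$ the cell piece gives $\chi_{n+1}(k_{n+1}(y))=(\widetilde{k}_{n+1}\comp\chi_{n+1})(y)=\widetilde{k}_{n+1}(\chi_{n+1}(y))$ by $\widetilde{k}_{n+1}\comp\chi_{n+1}=\chi_{n+1}\comp k_{n+1}$, while on $X_n$ we have $x=\incl_{X_n}(x)=\widetilde{k}_{n+1}(x)$ by $\widetilde{k}_{n+1}\vert_{X_n}=\incl_{X_n}$, so $\widetilde{K}_{n+1}(\cdot,t)=\widetilde{k}_{n+1}$.

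The one place that needs care — and the step I expect to be the main, though minor, obstacle — is the consistency of the two piecewise descriptions on the overlap $\chi_{n+1}(S_n) \subset X_n$, so that the endpoint identities are unambiguous. But this is precisely the compatibility verified in the paragraph showing that $\ad{\widetilde{K}_{n+1}}$ and $\widetilde{k}_{n+1}$ are well-defined, so no further argument is required there.
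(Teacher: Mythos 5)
Your proposal is correct and follows essentially the same route as the paper's own (much terser) proof: smoothness is inherited from the adjoint construction, and the homotopy property reduces to checking the endpoint values of $\lambda$ in each branch of $K_{n+1}$ and on the $X_{n}$ piece. Your version merely spells out the details the paper leaves implicit (in particular checking the full collars $t\le\varepsilon$ and $t\ge 1{-}\varepsilon$ rather than just $t=0,1$), so no further comment is needed.
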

\vskip-.5ex
\begin{Pf}
By definition, $\widetilde{K}_{n+1}(y,0)=y=\id_{X_{n+1}}(y)$ for all $y \in X_{n+1}$ and $\widetilde{K}_{n+1}(y,1)=\widetilde{k}_{n+1}(y)$ if $y \in D_{n+1}$.
Thus $\widetilde{K}_{n+1}$ is a smooth homotopy from $\id_{X_{n+1}}$ to $\widetilde{k}_{n+1}$. 
\end{Pf}

Thirdly, we choose an open covering $\mathcal{V}=\{B,A^{c}{=}M \smallsetminus A\}$ of $M$ and a smooth partition of unity $\rho_{B},\rho_{A^{c}} : M \to [0,1] \subset \real$ subordinate to $\mathcal{V}$, i.e., $\rho_{B}$ and $\rho_{A^{c}}$ are smooth functions satisfying $\supp{\rho_{B}} \subset B$, $\supp{\rho_{A^{c}}} \subset A^{c}$ and $\rho_{B}+\rho_{A^{c}}=1$ on $M$. 

We define a continuous map $H_{2} : M \times \real \to D_{n+1}$ by the following formula:\vspace{-.5ex}
$$
H_{2}(x,t) = \widetilde{K}_{n+1}(g_{1}(x),\rho_{A^{c}}(x){\cdot}t)
$$\vskip-.5ex\noindent
Then by definition, $H_{2}(x,0)=g_{1}(x)$ for $x \!\in\! M$.
Using a smooth function $\rho_{A^{c}}$, we obtain a map $g_{2} : M \to X_{n+1}$ given by $g_{2}(x)=H_{2}(x,1) = \widetilde{K}_{n+1}(g_{1}(x),\rho_{A^{c}}(x))$ for $x \!\in\! M$.
\begin{prop}\label{prop:g-2-properties}
$H_{2}$ gives a continuous homotopy relative to $A$ from $g_{1}$ to $g_{2}$ where $g_{2}$ is smooth on $B \cup (M \smallsetminus \supp{\rho_{V}})$ in $\smoothology{}$.
\end{prop}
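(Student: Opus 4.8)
The plan is to unpack the three assertions bundled into the statement and verify each as a direct consequence of two facts already in hand: the smoothness of $\widetilde{K}_{n+1}$ in $\smoothology{}$ (Proposition~\ref{prop:g-1'-properties}) and the two local smoothness properties of $g_1$ recorded in Proposition~\ref{prop:g-1-properties}. No new construction is needed; the content is assembly.

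First, the homotopy data. By Proposition~\ref{prop:g-1'-properties} we have $\widetilde{K}_{n+1}(y,0)=y$, so $H_2(x,0)=\widetilde{K}_{n+1}(g_1(x),0)=g_1(x)$, while $H_2(x,1)=\widetilde{K}_{n+1}(g_1(x),\rho_{A^c}(x))=g_2(x)$ by the very definition of $g_2$; and $H_2$ is continuous, being the composite of the continuous maps $g_1$, $\rho_{A^c}$ and $\widetilde{K}_{n+1}$. For the normalization at the starting end, recall that $\widetilde{K}_{n+1}(y,s)$ is built from $\lambda(s)$ and reduces to $y$ whenever $\lambda(s)=0$, i.e. for $s\le\varepsilon$. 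Since $0\le\rho_{A^c}\le1$, one has $\rho_{A^c}(x)t\le t\le\varepsilon$ for $t\le\varepsilon$, whence $H_2(x,t)=g_1(x)$ there. This is precisely the constancy near $t=0$ that lets $H_2$ be concatenated with the end of $H_1$ (which is stationary at $g_1$ for $t\ge1-\varepsilon$), so the two glue into a single continuous homotopy.

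Second, the relative-to-$A$ claim. As $\{\rho_B,\rho_{A^c}\}$ is subordinate to $\mathcal{V}=\{B,A^c\}$ with $\supp\rho_{A^c}\subset A^c=M\smallsetminus A$, the function $\rho_{A^c}$ vanishes identically on $A$. Hence for $x\in A$ and every $t$ we get $H_2(x,t)=\widetilde{K}_{n+1}(g_1(x),0)=g_1(x)$, so $H_2$ is stationary on $A$; in particular $g_2\vert_A=g_1\vert_A$ and the homotopy is relative to $A$. Third, and this is the only step with genuine content, the smoothness of $g_2$. Write $g_2=\widetilde{K}_{n+1}\comp(g_1,\rho_{A^c})$ with $(g_1,\rho_{A^c}):M\to X_{n+1}\times\real$. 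Since $\widetilde{K}_{n+1}$ is smooth in $\smoothology{}$ and $\rho_{A^c}$ is smooth, $g_2$ is smooth in $\smoothology{}$ on every open set on which $g_1$ is smooth in $\smoothology{}$. By Proposition~\ref{prop:g-1-properties}\,(\ref{prop:g-1-properties-2}), $g_1$ is smooth on the open superset $B$ of $A$; by Proposition~\ref{prop:g-1-properties}\,(\ref{prop:g-1-properties-3}), $g_1\vert_{M\smallsetminus\supp\rho_V}=\amalg_\alpha g_\alpha$ is smooth in the ordinary sense into $\amalg_\alpha O_\alpha$, and since it is realized inside $X_{n+1}$ through the smooth characteristic map $\chi_{n+1}$, it is smooth in $\smoothology{}$ as well. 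As $B$ and $M\smallsetminus\supp\rho_V$ are open, so is their union, and $g_2$ is smooth on $B\cup(M\smallsetminus\supp\rho_V)$ in $\smoothology{}$.

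I expect the smoothness step to be the only place demanding care: one must confirm that the two open pieces supplied by Proposition~\ref{prop:g-1-properties} cover exactly the open set $B\cup(M\smallsetminus\supp\rho_V)$ asserted, and that the ``ordinary sense'' smoothness on $M\smallsetminus\supp\rho_V$ genuinely upgrades to $\smoothology{}$-smoothness via $\chi_{n+1}$, so that postcomposition with the smooth $\widetilde{K}_{n+1}$ preserves smoothness on each piece. Everything else is bookkeeping: the endpoint identifications, the continuity of a composite, and the vanishing of $\rho_{A^c}$ on $A$.
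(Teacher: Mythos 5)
Your proposal is correct and follows essentially the same route as the paper: the relative-to-$A$ claim comes from the vanishing of $\rho_{A^{c}}$ on $A$, and the smoothness of $g_{2}$ on $B \cup (M \smallsetminus \supp{\rho_{V}})$ comes from composing the smooth $\widetilde{K}_{n+1}$ with $(g_{1},\rho_{A^{c}})$ and invoking Proposition \ref{prop:g-1-properties} (\ref{prop:g-1-properties-2}) and (\ref{prop:g-1-properties-3}). The extra details you supply (constancy of $H_{2}$ near $t=0$ via $\lambda$, and the upgrade of ordinary smoothness on $M \smallsetminus \supp{\rho_{V}}$ to $\smoothology{}$-smoothness through $\chi_{n+1}$) are sound refinements of points the paper leaves implicit.
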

\vskip-.5ex
\begin{Pf}
If $x \!\in\! A$, then $\rho_{A^{c}}(x)=0$ and $H_{2}(x,t)$ $=$ $\widetilde{K}_{n+1}(g_{1}(x),0)$ $=$ $g_{1}(x)$, for all $t \,{\in}\, \real$.
Thus $g_{1}$ is homotopic relative to $A$ to $g_{2}$:
By Proposition \ref{prop:g-1-properties} (\ref{prop:g-1-properties-2}) and (\ref{prop:g-1-properties-3}), $g_{1}$ is smooth on $B \cup (M \smallsetminus \supp{\rho_{V}})$ and $\widetilde{K}_{n+1}$ is smooth by Proposition \ref{prop:g-1'-properties}.
Hence $g_{2}$ is smooth on $B \cup (M \smallsetminus \supp{\rho_{V}})$, since $\rho_{A^{c}}$ is a smooth function.
\end{Pf}

Finally, we take open subsets $N=g_{1}^{-1}(V^{(-\eta)})$ and $L=g_{1}^{-1}(U)$ of $M$.
Here, $U \cup V^{(-\eta)} \supset X_{n+1}$ implies that $\{N,L\}$ is an open covering of $M$.
Moreover, $g_{2}\vert_{N} : N \to X_{n}$ is smooth on an open set $N \cap (B \cup (M \smallsetminus \supp{\rho_{V}}))$ in $\smoothology{}$ by Proposition \ref{prop:g-2-properties}, which is a superset of a closed set $N \cap (A \cup g_{1}^{-1}(\Cl{U}))$ in $N$ by Corollary \ref{cor:g-1-openset}.
By induction hypothesis, $g_{2}\vert_{N}$ is homotopic to a smooth map $g'_{N} : N \to X_{n}$ relative to $N \cap (A \cup g_{1}^{-1}(\Cl{U}))$ in $\smoothology{}$.

We define a map $g : M \to X_{n+1}$ by \vspace{-.5ex}
$$
g\vert_{N} = g'_{N} : N \to X_{n},\quad g\vert_{L} = g_{2}\vert_{L} : L \to X_{n+1}.
$$\vskip-.5ex\noindent
Since $g_{1}^{-1}(U) \subset g_{1}^{-1}(\Cl{U})$, it follows that $N \cap L = N \cap g_{1}^{-1}(U)$ is a subset of $N \cap (A \cup g_{1}^{-1}(\Cl{U}))$, and that $g'_{N}$ agrees with $g_{2}$ on $N \cap L$, which implies that $g$ is well-defined.
Since both of maps $g'_{N} : N \to X_{n}$ and $g_{2}\vert_{L} : L \to X_{n+1}$ are smooth in $\smoothology{}$, so is $g$. 
Furthermore, $g$ is continuously homotopic, by induction hypothesis, to $g_{2}$, by Proposition \ref{prop:g-2-properties}, to $g_{1}$, and, by Proposition \ref{prop:g-1-properties} (\ref{prop:g-1-properties-1}) to $f$.
It completes the proof of Theorem \ref{thm:main}.
\qed

\bigskip

\section*{Acknowledgements}

The author thanks Dan Christensen, Katsuhiko Kuribayashi, Kazuhisa Shimakawa, Tadayuki Haraguchi and Hiroshi Kihara for their kind and valuable comments and suggestions concerning on our recent study.
More precisely, they pointed out that a CW complex must be smooth around Theorem 9.7, Corollary 9.8 and entire \S 10 in \cite{MR3991180}, and Theorem A.1 should be proved rigorously, which is performed in this paper.

%
%

\bibliographystyle{alpha}
\bibliography{2020diff}
\vskip1ex

\end{document}